\newtheoremstyle{mystyle}{}{}{\slshape}{2pt}{\scshape}{.}{ }{} 
\newtheorem{thm}{Theorem}[section]
\newtheorem{cor}[thm]{Corollary}
\newtheorem{prop}[thm]{Proposition}
\newtheorem{lemme}[thm]{Lemma}
\newtheorem{fait}[thm]{Fact}
\theoremstyle{definition}
\newtheorem{defi}[thm]{Definition}
\theoremstyle{mystyle}
\theoremstyle{remark}
\newcommand{\monster}{\mathcal U}
\newcommand{\la}{\langle}
\newcommand{\ra}{\rangle}
\DeclareMathOperator{\Av}{Av}
\def\indsym#1#2{%
 \setbox0=\hbox{$\m@th#1x$}%
 \kern\wd0%
 \hbox to 0pt{\hss$\m@th#1\mid$\hbox to 0pt{$\m@th#1^{#2}$\hss}\hss}%
 \lower.9\ht0\hbox to 0pt{\hss$\m@th#1\smile$\hss}%
 \kern\wd0}
\def\nindsym#1#2{%
 \setbox0=\hbox{$\m@th#1x$}%
 \kern\wd0%
 \hbox to 0pt{\hss$\m@th#1\not$\kern1.4\wd0\hss}
 \hbox to 0pt{\hss$\m@th#1\mid$\hbox to 0pt{$\m@th#1^{#2}$\hss}\hss}%
 \lower.9\ht0\hbox to 0pt{\hss$\m@th#1\smile$\hss}%
 \kern\wd0}
\title{VC-sets and generic compact domination}
\author{Pierre Simon\footnote{Partially supported by the European Research Council under the European Unions Seventh Framework Programme (FP7/2007-2013) / ERC Grant agreement no. 291111. Partially supported by ValCoMo (ANR-13-BS01-0006)}\\ CNRS, Universit\'e Lyon 1}
\date{}
\begin{document}
\maketitle

\begin{abstract}
Let $X$ be a closed subset of a locally compact second countable group $G$ whose family of translates has finite VC-dimension. We show that the topological border of $X$ has Haar measure 0. Under an extra technical hypothesis, this also holds if $X$ is constructible. We deduce from this generic compact domination for definably amenable NIP groups.
\end{abstract}

Let $G$ be a locally compact second countable group and fix a Haar measure $\mu$. Let $\mathcal F\subseteq \mathfrak P(G)$ be a family of subsets of $G$. We say that $\mathcal F$ has finite VC-dimension if for some $n$, for every subset $A\subset G$ of size $n$, we have $\mathcal F \cap A \neq \mathfrak P(A)$, where $\mathcal F \cap A = \{S\cap A : S\in \mathcal F\}$. This notion has been well studied (in fact for families of subsets of arbitrary measure spaces). A number of combinatorial and measure theoretic properties are known: most notably families of finite VC-dimension have $\epsilon$-approximations (Fact \ref{fact_epsilonnet}). See \cite[Chapter 10]{Matousek} for more information. In the first section of this paper, we assume that the family $\mathcal F$ is stable under translation and our aim is to show, under further technical conditions, that the sets in $\mathcal F$ cannot be too fractal-like. For this, we might as well assume that $\mathcal F$ is exactly the family of translates of some set $X\subseteq G$. Our main theorem is the following.

\begin{thm}
Let $X\subseteq G$ and assume that the family of left-translates of $X$ has finite VC-dimension. Assume furthermore that either $X$ is closed or that the following holds for both $Y=X$ and $Y=G\setminus X$:

\quad $Y$ is $F_\sigma$ and for all $x\in Y$ and neighborhood $U$ of $x$, $\mu(U\cap Y)>0$.

\noindent
\underline{Then} the topological border $\partial X$ of $X$ has Haar measure 0.
\end{thm}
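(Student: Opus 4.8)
The plan is to argue by contradiction: assuming $\mu(\partial X)>0$, I will show that the family of left-translates admits no $\epsilon$-approximation for a suitable \emph{fixed} $\epsilon>0$, contradicting Fact \ref{fact_epsilonnet}. First I would localize, since it suffices to treat separately each compact $W\subseteq G$ of positive finite measure: replace $\mu$ by the normalized probability measure $\nu=\mu(W)^{-1}\mu|_W$ and the family by $\{gX\cap W:g\in G\}$, whose VC-dimension is no larger. The two functions to watch are $F(g)=\nu(gX)$ and, for a finite set $A$ of size $N$, the empirical frequency $\hat F_A(g)=N^{-1}|A\cap gX|$. The point of departure is that $F$ is \emph{continuous} in $g$, by continuity of translation in $L^1(\mu)$, whereas $\hat F_A$ is only semicontinuous and is locally constant off a small set of translates; being an $\epsilon$-approximation forces $\hat F_A$ to stay within $\epsilon$ of the continuous function $F$, uniformly in $g$.

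The core mechanism is cleanest in the model case where $X$ is closed with empty interior (a positive-measure, nowhere dense set): then $\partial X=X$, the set $\{g:A\cap g\partial X=\varnothing\}$ is the complement of the finitely many closed nowhere dense sets $a(\partial X)^{-1}$ (for $a\in A$), hence open and dense, while $F(g)=\nu(g\partial X)\ge c>0$ for all $g$ in a neighborhood of the identity (by continuity, since $\nu(\partial X)>0$). Choosing $g$ in the intersection of these two sets gives $\hat F_A(g)=0$ but $F(g)\ge c$, so $A$ fails to be an $\epsilon$-approximation as soon as $\epsilon<c$. As this holds for \emph{every} finite $A$, no $\epsilon$-approximation exists, so the VC-dimension is infinite. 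To reduce the general closed case to this, I would center $W$ at a Lebesgue density point $x_0$ of $\partial X$, so that $\nu(\partial X)\ge1-\delta$ while $\mathrm{int}(X)\cap W$ and the exterior each have $\nu$-measure below $\delta$. For translates $g$ near the identity avoiding the (still nowhere dense) set where $A$ meets $g\partial X$, the empirical value reduces to $N^{-1}|A\cap g\,\mathrm{int}(X)|$ while $F(g)\ge1-\delta'$, so an $\epsilon$-approximation would force almost all of $A$ into the translate $g\,\mathrm{int}(X)$ of a measure-tiny set, simultaneously for a dense set of $g$; the plan is to contradict this by a covering/counting argument using finitely many such translates whose interior parts are spread out enough that no single finite $A$ can be almost entirely contained in each at once.

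For the constructible (non-closed) case, the role played by closedness—that $\partial X$ is nowhere dense and that $X$ and $G\setminus X$ are each dense in their own support—is taken over by the technical hypothesis that $X$ and $G\setminus X$ are $F_\sigma$ with positive measure in every neighborhood of each of their points. This keeps $\mathrm{supp}(\nu|_X)=\overline X$ and renders the topological boundary measure-theoretically detectable, so the same deficit argument applies. Two standing technical points are the use of second countability to run the Lebesgue density/differentiation step for Haar measure, and keeping track of the modular function when passing between $\partial X$ and $(\partial X)^{-1}$ under inversion.

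The hardest step, and the heart of the matter, is the covering/counting argument of the second paragraph: the danger is \emph{cancellation}, namely that the finite set $A$ ``hides'' in the measure-small interior and thereby tracks $F$ on translates near the identity even though the boundary is fat. Overcoming this should require exploiting the multi-scale structure forced by finite VC-dimension—concretely, producing enough nearly independent translates of $\mathrm{int}(X)$ near a density point of $\partial X$ to overwhelm any fixed finite $A$—and I expect this to be exactly where the full strength of Fact \ref{fact_epsilonnet} is used, its guaranteed size depending only on $\epsilon$ and the VC-dimension and not on the particular approximating set.
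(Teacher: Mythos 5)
Your ``model case'' ($X$ closed with empty interior) is correct, and it is essentially the contrapositive of what the paper proves as Lemma~\ref{lem_interior}: a positive-measure closed (or $F_\sigma$) VC-set must have non-empty interior, proved by exactly your combination of the $\epsilon$-approximation theorem with Baire category. But the general case does not reduce to it by your counting scheme, and the difficulty you flag yourself---``cancellation'', i.e.\ $A$ hiding in the measure-small interior---is fatal to the strategy as proposed, not a technical step to be patched. Two concrete reasons. First, your covering/counting argument needs finitely many generic translates $g_1\mathring X,\ldots,g_k\mathring X$, with each $g_i$ near the identity, whose common intersection is empty: otherwise the bound ``each contains $\geq(1-\eta)N$ points of $A$, hence the intersection contains $\geq(1-k\eta)N>0$ points'' yields no contradiction. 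But if $\mathring X$ contains any ball, then every translate by a sufficiently small $g_i$ contains a fixed smaller ball, so the intersection is never empty, and the genericity requirement (avoiding $\bigcup_{a\in A} a(\partial X)^{-1}$) does nothing to prevent this. Second, and more structurally, your plan uses finite VC-dimension only through the existence of $\epsilon$-approximations for the single measure $\nu$; so the statement you actually need is ``closed with fat boundary implies no $\epsilon$-approximation for localized Haar measure''. This is not implied by the theorem itself (infinite VC-dimension is perfectly compatible with the existence of $\epsilon$-approximations for one particular measure), and the obstruction to proving it is precisely \emph{simultaneous escape}: an average $\frac 1N\sum_i 1_{g_iX}$ of translated indicators of a closed set is upper semicontinuous, so near a point lying on many translated boundaries each indicator can drop, but each along its own escape direction; to force the average to drop by more than $\epsilon$ you must produce a single nearby point escaping at least $\epsilon N$ of the translates \emph{at once}, and nothing in your setup produces such a point.

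That simultaneous-escape problem is exactly what the paper's key lemma (Theorem~\ref{th_bordernil}) solves, by a tool absent from your proposal: Steinhaus's theorem ($\Delta\Delta^{-1}$ contains a neighborhood of $1$ when $\mu(\Delta)>0$). For a tame pair $(V_0,V_1)$ whose closures share a set of positive measure, the paper inductively constructs $g_0,\ldots,g_{n-1}$ and points $(x_\eta:\eta\in{}^n\{0,1\})$ realizing all $2^n$ patterns $g_kx_\eta\in V_{\eta(k)}$---i.e.\ it builds the simultaneous escapes directly, using Steinhaus at each stage to keep the relevant sets of positive measure---contradicting finite (dual) VC-dimension. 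The $\epsilon$-approximation theorem is then used only for the part your model case already covers: showing that each boundary point with positive local boundary measure lies in $\overline{\mathring X}$, which reduces a fat $\partial X$ to a fat $\overline{\mathring X}\cap\overline{X^{ext}}$, and the latter is killed by the Steinhaus argument applied to the tame pair $(\mathring X, X^{ext})$. A lesser but real additional gap: you invoke a Lebesgue density point of $\partial X$ for Haar measure on an arbitrary locally compact second countable group; a density theorem in that generality is itself delicate, and the paper deliberately avoids it, using only the Lindel\"of argument that almost every point of a positive-measure set has positive (not near-full) local measure.
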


This theorem has a precise motivation coming from model theory: In the second part of the paper, we use it to prove the \emph{generic compact domination conjecture} for definably amenable NIP groups. To explain this, we now assume that the reader is familiar with model theory and in particular NIP theories.

Let $T$ be an NIP theory and $G$ a definable group in $T$. As usual, we denote by $G^{00}$ the smallest type-definable subgroup of $G$ of bounded index (which exists by NIP). We say that $G$ is definably amenable if it admits a $G$-invariant finitely additive probability measure on definable subsets of $G$. This notion was first studied in \cite{NIP1}, \cite{NIP2} and more systematically explored in \cite{tamedyn}. The papers \cite{NIP1} and \cite{NIP2} study in particular \emph{fsg} groups which are definably amenable groups which enjoy stronger stable-like properties. The typical example of an fsg group is a definably compact group in an o-minimal theory. Such a group has a unique invariant measure $\mu$. Let now $S\subset S_G(\monster)$ be the set of global generic types of $G$, equivalently (under the fsg assumption) the set of types $p\in S_G(\monster)$ which are $G^{00}$-invariant. Let $X$ be a definable subset of $G$ and let $h$ be the Haar measure on the compact group $G/G^{00}$. The generic compact domination conjecture from \cite{NIP2} says that for $h$-almost every coset $gG^{00}$ of $G^{00}$, the set $S\cap gG^{00}$ lies either entirely inside $X$ or entirely outside $X$.

A proof of this conjecture was announced in \cite{NIP3}, but there is a mistake there: the proof of Theorem 4.3 is flawed. We will give here the first (hopefully) correct proof of the conjecture. In fact, we prove a more general statement that holds for all definably amenable groups, making use of the work done in \cite{tamedyn}.

We also deduce as a corollary the following result: If $G$ is definably amenable NIP and $\mu$ is a global measure which is $G^{00}$-invariant and $G(M_0)$-invariant for a small model $M_0$, then $\mu$ is actually $G$-invariant.

\smallskip
Thanks to the referee for a number of helpful remarks.

\section{Preliminaries}

Let $G$ be a topological space. A constructible set $X\subseteq G$ is a finite boolean combination of closed sets. If $X\subseteq G$, we let $\partial X$ denote the border of $X$: $\partial X=\overline X\setminus \mathring X = \overline X \cap \overline{X^c}$. We also let $X^{ext}$ denote the exterior of $X$, that is the interior of $X^c$.

\smallskip
We now recall some facts about VC-dimension without proofs. A good reference on the subject is \cite[Chapter 10]{Matousek}.

Let $X$ be any set and $\mathcal F\subseteq \mathfrak P(X)$ a family of subsets of $X$. For $X_0\subseteq X$, we write $\mathcal F \cap X_0 = \{S\cap X_0 : S\in \mathcal F\}$ and we say that $X_0$ is \emph{shattered} by $\mathcal F$ if $\mathcal F \cap X_0 = \mathfrak P(X_0)$.

We say that the family $\mathcal F$ has VC-dimension $n$ if it shatters some set of size $n$, but no set of size $n+1$. We say that it has infinite VC-dimension if it does not have finite VC-dimension, that is if for every $n$, some subset $X_n \subseteq X$ of size $n$ is shattered by $\mathcal F$. The exact value of the VC-dimension will not be important to us here. What is important is the dichotomy between finite and infinite VC-dimension.

One can also define the dual VC-dimension of $\mathcal F$ as the largest integer $n$ (or $\infty$ if there is none) such that there are $X_1,\ldots,X_n \in \mathcal F$ for which all the $2^n$ cells in the Venn diagram they generate are non-empty. An easy but important observation is that $\mathcal F$ has finite VC-dimension if and only if it has finite dual VC-dimension (though the two may not coincide). We will use this freely in this text.

The only theorem we will need about families of finite VC-dimension is the following fundamental fact (first proved in \cite{VC}). In the statement, and later in the text, $\Av(x_1,\ldots,x_n; S)$ stands for $\frac 1 n |\{i\leq n : x_i \in S\}|$.

\begin{fait}[$\epsilon$-approximations] \label{fact_epsilonnet}
For any $k > 0$ and $\varepsilon > 0$ there is $N \in \mathbb N$ satisfying the following.

If $(X, \mu)$ is a probability space, and $\mathcal{F}$ a family of subsets of $X$ of VC-dimension $\leq k$ such that:
\begin{enumerate}
\item every set in $\mathcal{F}$ is measurable;
\item for each $n$, the function $f_n: X^n \to [0,1]$ given by $$(x_1, \ldots, x_n) \mapsto \sup_{S \in \mathcal{F}} | \Av(x_1, \ldots, x_n; S) - \mu(S) |$$  is measurable;
\item for each $n$, the function $g_n: X^{2n} \to [0,1]$
$$(x_1, \ldots, x_n, y_1, \ldots, y_n) \mapsto \sup_{S \in \mathcal{F}} | \Av(x_1, \ldots, x_n; S) - \Av(y_1, \ldots, y_n; S) |$$
is measurable.
\end{enumerate}
\underline{Then} there are some $x_1, \ldots, x_N \in X$ (possibly with repetitions) such that for any $S \in \mathcal{F}$,
$$\left | \Av(x_1,\ldots,x_N;S) - \mu(S) \right | < \epsilon.$$\end{fait}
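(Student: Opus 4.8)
The plan is to run the classical Vapnik--Chervonenkis argument, whose two pillars are a symmetrization (``ghost sample'') trick and the Sauer--Shelah bound on the number of traces that a class of VC-dimension $\leq k$ can cut out on a finite set. The three measurability hypotheses are there precisely to license the probabilistic manipulations below, since $\mathcal F$ may be uncountable and the suprema over $S\in\mathcal F$ are not automatically measurable; the combinatorial heart of the proof is insensitive to them.

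First I would draw an i.i.d.\ sample $\bm x=(x_1,\dots,x_N)$ from $\mu$ and consider the ``bad event''
$$A=\{\bm x : \textstyle\sup_{S\in\mathcal F}|\Av(x_1,\dots,x_N;S)-\mu(S)|\ge\epsilon\},$$
which is measurable by hypothesis (2). The goal is to show $\mu^{\otimes N}(A)<1$ for $N$ large enough, since then any $\bm x\notin A$ gives the desired $\epsilon$-approximation. To estimate $\mu^{\otimes N}(A)$ I would introduce an independent ghost sample $\bm y=(y_1,\dots,y_N)$ and the two-sample event
$$B=\{(\bm x,\bm y):\textstyle\sup_{S\in\mathcal F}|\Av(\bm x;S)-\Av(\bm y;S)|\ge\epsilon/2\},$$
measurable by hypothesis (3). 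Since the variance of $\Av(\bm y;S)$ is at most $1/(4N)$, a Chebyshev estimate shows that once $N\ge 2/\epsilon^2$ the ghost sample lies within $\epsilon/2$ of $\mu(S)$ with probability at least $1/2$; combining this with Fubini yields the symmetrization bound $\mu^{\otimes N}(A)\le 2\,\mu^{\otimes 2N}(B)$.

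Next I would bound $\mu^{\otimes 2N}(B)$ by a \emph{random swapping} argument. For a fixed configuration of the $2N$ points, the quantity $\Av(\bm x;S)-\Av(\bm y;S)$ depends only on the trace $S\cap\{x_1,\dots,x_N,y_1,\dots,y_N\}$, and by Sauer--Shelah there are at most $O(N^k)$ distinct such traces. The measure $\mu^{\otimes 2N}$ is invariant under each of the $2^N$ maps that swap $x_i\leftrightarrow y_i$, so for a fixed trace the relevant difference becomes a sum of independent bounded centered terms; Hoeffding's inequality then bounds the probability (over the swaps) that it exceeds $\epsilon/2$ by $2e^{-N\epsilon^2/8}$. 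A union bound over the $O(N^k)$ traces followed by integration over the points gives $\mu^{\otimes 2N}(B)\le C\,N^k e^{-N\epsilon^2/8}$ for an absolute constant $C$.

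Putting these together, $\mu^{\otimes N}(A)\le 2C\,N^k e^{-N\epsilon^2/8}\to 0$, so a choice of $N$ depending only on $k$ and $\epsilon$ forces $\mu^{\otimes N}(A)<1$ and produces the required points $x_1,\dots,x_N$. The main obstacle is not the combinatorics, which is routine once Sauer--Shelah and Hoeffding are granted, but the measurability bookkeeping: I would have to verify carefully that hypotheses (2) and (3) really do make $A$, $B$, and all intermediate sup-functions measurable, so that Chebyshev, Fubini, and the swapping symmetry can legitimately be applied to the suprema over the possibly uncountable family $\mathcal F$---this is exactly the role those hypotheses are designed to play.
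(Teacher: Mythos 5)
The paper states this Fact without proof, citing it as the classical Vapnik--Chervonenkis theorem (with \cite{Matousek} as a reference), so there is no internal argument to compare against; your proposal correctly reconstructs exactly that standard proof --- ghost-sample symmetrization via Chebyshev and Fubini, Sauer--Shelah counting of traces, and Hoeffding's inequality over the $2^N$ coordinate swaps --- with the measurability hypotheses invoked precisely where they are needed. The only point requiring routine care is that the supremum defining the bad event $A$ need not be attained, so the selection of a witness $S$ for $x\in A$ and the threshold $\epsilon/2$ in $B$ need a small amount of slack in the constants; this is cosmetic and does not affect correctness.
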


We check now that the measurability conditions are satisfied when working with the family of translates of a single measurable set in a locally compact group.

\begin{prop}\label{prop_measurability}
Let $G$ be a second countable locally compact group equipped with a Haar measure $\mu$. Let $U,X\subseteq G$ be Borel sets. Then the family $\mathcal F = \{gX : g\in U\}$ of $U$-translates of $X$ satisfies the assumptions 1, 2, 3 in Fact \ref{fact_epsilonnet} with respect to $(G,\mu)$.
\end{prop}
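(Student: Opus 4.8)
The plan is to reduce conditions 2 and 3 to a single statement, namely that the pointwise supremum over $g\in U$ of a jointly Borel function is measurable, and then to deal with this supremum over the uncountable index set $U$ using descriptive set theory. Condition 1 is immediate: for each $g$ the left translation $x\mapsto gx$ is a homeomorphism of $G$, so it carries Borel sets to Borel sets, and hence every $gX$ with $g\in U$ is Borel, in particular $\mu$-measurable.

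For conditions 2 and 3 the starting observation is that, $\mu$ being left-invariant, $\mu(gX)=\mu(X)$ for all $g$, so the term $\mu(S)$ appearing in $f_n$ is the constant $\mu(X)$. Next, for each $i$ the map $(x_1,\dots,x_n,g)\mapsto g^{-1}x_i$ is continuous from $G^n\times G$ to $G$, because $(a,b)\mapsto a^{-1}b$ is continuous in a topological group; composing with the Borel function $\mathbf 1_X$ shows that $(x_1,\dots,x_n,g)\mapsto \mathbf 1_X(g^{-1}x_i)$ is Borel. Since $\Av(x_1,\dots,x_n;gX)=\frac1n\sum_{i\le n}\mathbf 1_X(g^{-1}x_i)$, the function
$$H_n(x_1,\dots,x_n,g)=\Bigl|\tfrac1n\textstyle\sum_{i\le n}\mathbf 1_X(g^{-1}x_i)-\mu(X)\Bigr|$$
is Borel on $G^n\times G$, and $f_n(x_1,\dots,x_n)=\sup_{g\in U}H_n(x_1,\dots,x_n,g)$ because $U$ is Borel. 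The function $g_n$ is treated identically, with $H_n$ replaced by the Borel function measuring the difference of the two averages.

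The main obstacle is that a supremum of a measurable function over an uncountable parameter need not be measurable. Here, however, $G$ is a second countable locally compact (Hausdorff) group, hence Polish and standard Borel, so that for every $t$ the superlevel set
$$\{\,\bar x\in G^n : f_n(\bar x)>t\,\}=\pi\bigl(\{(\bar x,g)\in G^n\times U : H_n(\bar x,g)>t\}\bigr),$$
where $\pi\colon G^n\times G\to G^n$ is the projection, is the projection of a Borel set and is therefore analytic. By the classical theorem that analytic subsets of a Polish space are universally measurable, each such superlevel set is measurable for the completion of $\mu^n$; letting $t$ range over the rationals shows that $f_n$ is measurable. The same argument applies verbatim to $g_n$ on $G^{2n}$.

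One may note that for fixed $\bar x$ the quantity $\Av(\bar x;gX)$ depends only on the incidence pattern $(\mathbf 1_X(g^{-1}x_1),\dots,\mathbf 1_X(g^{-1}x_n))\in\{0,1\}^n$, and so takes at most $n+1$ distinct values; thus $f_n(\bar x)$ is really a maximum over those values $k/n$ attained by some $g\in U$, which again makes the nonemptiness of a projection the essential point. Throughout, \emph{measurable} is understood with respect to the completion of the relevant product of Haar measure, which is all that Fact \ref{fact_epsilonnet} requires.
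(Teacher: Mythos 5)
Your proof is correct and takes essentially the same approach as the paper: both handle the supremum over the uncountable index set $U$ by writing the relevant subsets of $G^n$ as projections of Borel subsets of a Polish space, then invoking the fact that analytic sets are universally measurable. The only cosmetic difference is that the paper decomposes the problem into the finitely many incidence-pattern sets $A_I$ (using that $\Av$ takes finitely many values, which you mention as an aside), whereas you apply the projection argument directly to the superlevel sets $\{f_n > t\}$ for rational $t$.
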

\begin{proof}
Assumption 1 is clear since Borel sets are measurable.

Define $f_n$ and $g_n$ as in assumptions 2 and 3 and we need to show that $f_{n}$ and $g_{n}$ are measurable.

Since all the elements of $\mathcal F$ have the same measure $\mu(X)$, we have

$$f_n(x_1,\ldots,x_n)=\sup_{g\in U} |\Av(x_1,\ldots,x_n;g X) -\mu(X)|.$$

Note that $\Av(x_1,\ldots,x_n;g X)$ can take only finitely many values. It is then enough to show that for a fixed $I\subseteq n$, the set 
$$A_I=\{(x_1,\ldots,x_{n})\in G^n \mid \text{ for some }g\in U,~ x_i\in g X \Leftrightarrow i\in I\}$$
 is measurable. But we can write $A_I$ as the projection of $A'_I\subseteq G^{n+1}$ where $A'_I$ is the intersection of $\{(g,x_1,\ldots,x_{n}) : g^{-1}x_i \in X, g\in U\}$ for $i\in I$ and $\{(g,x_1,\ldots,x_{n}) : g^{-1}x_i \notin X, g\in U\}$ for $i\notin I$. As group multiplication is continuous and both $X$ and $U$ are Borel, those sets are Borel as well. Hence $A_I$ is analytic. Now $G$ is a Polish space (\cite[9.A]{Kechris1995}) and analytic subsets of Polish spaces are universally measurable (that is measurable for any $\sigma$-finite Borel measure; see e.g. \cite[Theorem 29(7)]{Kechris1995}). In particular they are measurable with respect to the Haar measure $\mu$.
 
Finally, measurabilty of the function $g_n$ follows at once from measurability of the sets
$$B_I=\{(x_1,\ldots,x_{n},x_{n+1},\ldots,x_{2n})\in G^{2n} \mid \text{ for some }g\in U,~ x_i\in g X \Leftrightarrow i\in I\},$$

where now $I\subseteq 2n$.
\end{proof}

\section{VC-sets in locally compact groups}

In this section, we fix a second countable locally compact group $G$ and a left-invariant Haar measure $\mu$ on it.

For any integer $k$ and set $A$, we let ${}^kA$ denote the set of functions from $k=\{0,\ldots,k-1\}$ to $A$.

\begin{defi}
A set $X\subseteq G$ is called a \emph{VC-set} if the family $\{g X : g\in G\}$ of left translates of $X$ has finite VC-dimension.
\end{defi}



\begin{defi}
A pair $(V_0,V_1)$ of disjoint open subsets of $G$ is called a \emph{tame pair} if there is a maximal $n$ for which we can find $g_0,\ldots,g_{n-1}$ and $(x_\eta:\eta\in {}^n\{0,1\})$ in $G$ such that for all $k<n$ and $\eta$, $g_k x_\eta \in V_{\eta(k)}$.
\end{defi}

In particular, if $X$ is a VC-set, then $(\mathring X, X^{ext})$ is a tame pair (using VC-duality). This terminology is inspired from similar notions arising in the study of tame dynamical systems (see e.g. \cite{huang}).

We recall the following well-known theorem of Steinhaus which holds in any locally compact group (see e.g. \cite{steinhaus} for a short proof).

\begin{fait}[Steinhaus]
Let $X\subseteq G$ be a measurable set of positive Haar measure. Then the set $XX^{-1}=\{xy^{-1}:x,y\in X\}$ contains a neighborhood of the identity.
\end{fait}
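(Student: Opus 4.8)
The plan is to reduce to the case where $X$ is compact and then exploit the fact that a left-invariant Haar measure is ``almost'' translation invariant on a set trapped between a compact set and a slightly larger open set. First I would invoke inner regularity of $\mu$ to choose a compact set $K\subseteq X$ with $0<\mu(K)<\infty$ (finiteness being automatic since compact sets have finite Haar measure). As $KK^{-1}\subseteq XX^{-1}$, it then suffices to produce a neighborhood of the identity $e$ contained in $KK^{-1}$.

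Next, using outer regularity I would pick an open set $V\supseteq K$ with $\mu(V)<2\mu(K)$; this is possible precisely because $\mu(K)$ is positive and finite. The key geometric input is that, since $K$ is compact, $V$ is open, and $K\subseteq V$, there is an open neighborhood $W$ of $e$ with $WK\subseteq V$. I would establish this by a standard tube argument: for each $x\in K$, continuity of multiplication at the point $(e,x)$ supplies open neighborhoods $W_x\ni e$ and $O_x\ni x$ with $W_xO_x\subseteq V$; covering $K$ by finitely many $O_{x_1},\ldots,O_{x_m}$ and setting $W=\bigcap_{i\leq m}W_{x_i}$ gives the required $W$, because any $x\in K$ lies in some $O_{x_i}$ and then $Wx\subseteq W_{x_i}O_{x_i}\subseteq V$.

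Finally I would verify $W\subseteq KK^{-1}$. Fix $w\in W$. Both $K$ and $wK$ are then contained in $V$, and by left-invariance $\mu(wK)=\mu(K)$. Were $K$ and $wK$ disjoint, we would get $\mu(V)\geq\mu(K\cup wK)=2\mu(K)$, contradicting the choice of $V$. Hence $K\cap wK\neq\emptyset$, so there are $k_1,k_2\in K$ with $wk_1=k_2$, whence $w=k_2k_1^{-1}\in KK^{-1}\subseteq XX^{-1}$. Since $w\in W$ was arbitrary, $W$ is the desired neighborhood of $e$.

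The only genuinely nontrivial ingredients are the regularity properties of the Haar measure (inner regularity on finite-measure sets, outer regularity by open sets) together with the compactness argument producing $W$; all of these are classical, so I do not anticipate a real obstacle. An essentially equivalent alternative would be to note that $g\mapsto\mu(K\cap gK)$ is exactly the convolution $(\mathbf{1}_K*\mathbf{1}_{K^{-1}})(g)$, which is continuous and equal to $\mu(K)>0$ at $g=e$, and then to read off a neighborhood of $e$ on which it stays positive (each such $g$ forcing $K\cap gK\neq\emptyset$, hence $g\in KK^{-1}$); this simply trades the elementary covering argument for the continuity of translation in $L^1$.
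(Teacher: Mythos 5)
Your proof is correct. Note, though, that the paper does not prove this fact at all: it is quoted as a known theorem of Steinhaus, with a pointer to the literature for a short proof, so there is no internal argument to compare yours against. What you give is the classical regularity proof: compact $K\subseteq X$ of positive finite measure, open $V\supseteq K$ with $\mu(V)<2\mu(K)$, a neighborhood $W$ of the identity with $WK\subseteq V$ via the tube/compactness argument, and then the pigeonhole step $K\cap wK\neq\emptyset$ for $w\in W$. All steps are sound. One small point worth making explicit: to get the compact $K$ you invoke inner regularity, which for a set of possibly infinite Haar measure requires first passing to a finite-measure piece; in the paper's setting this is harmless, since $G$ is second countable and locally compact, hence $\sigma$-compact, so $X$ meets some compact set in positive (finite) measure and inner regularity applies there. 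Your alternative remark, reading the conclusion off the continuity of $g\mapsto\mu(K\cap gK)=(\mathbf{1}_K*\mathbf{1}_{K^{-1}})(g)$, is the other standard route and is equally valid; it replaces the covering argument by continuity of translation in $L^1$, which is what the short proofs in the literature typically do.
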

%

We now prove the main technical theorem of this paper.

\begin{thm}\label{th_bordernil}
Let $(V_0,V_1)$ be a tame pair in $G$, then $\mu(\overline{V_0} \cap \overline{V_1})=0$.
\end{thm}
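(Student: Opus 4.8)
The plan is to prove the contrapositive: assuming $\mu(\overline{V_0}\cap\overline{V_1})>0$, I would show that the pair $(V_0,V_1)$ fails to be tame, i.e. that for every $n$ one can find $g_0,\ldots,g_{n-1}$ and $(x_\eta:\eta\in{}^n\{0,1\})$ with $g_k x_\eta\in V_{\eta(k)}$ for all $k<n$ and $\eta$. Writing $W=\overline{V_0}\cap\overline{V_1}$ and rephrasing the configuration, this amounts to producing, for each $n$, elements $g_0,\ldots,g_{n-1}$ for which all $2^n$ open ``atoms'' $U_\eta:=\bigcap_{k<n}g_k^{-1}V_{\eta(k)}$ are nonempty; a point $x_\eta\in U_\eta$ then witnesses non-tameness, and since $n$ is arbitrary there is no maximal one. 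As the $U_\eta$ are open and the bound is uniform in $n$, the natural engine is an induction on $n$ that doubles the family of atoms at each step.

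To make the induction go through I would carry a \emph{fatness invariant}: at stage $n$ the $2^n$ atoms should satisfy $\mu(W\cap U_\eta)>0$. This is the self-similar condition that allows iteration, and it is exactly here that the hypothesis $\mu(W)>0$, rather than merely $W\neq\emptyset$, is used. The inductive step reduces to the following \emph{Straddling Lemma}: given finitely many measurable sets $K_1,\ldots,K_r\subseteq W$ of positive measure, there is a single $g\in G$ such that for every $j\leq r$ both $\mu(gK_j\cap V_0)>0$ and $\mu(gK_j\cap V_1)>0$. Granting this with the $K_j$ ranging over the sets $W\cap U_\eta$, one sets $g_n=g$ and replaces each $U_\eta$ by the two open sets $U_\eta\cap g^{-1}V_0$ and $U_\eta\cap g^{-1}V_1$, whose intersections with $W$ remain of positive measure, restoring the invariant with $2^{n+1}$ atoms.

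For the Straddling Lemma I would first treat a single $K=K_j$. A Fubini computation over a symmetric neighborhood $U$ of the identity gives $\int_U\mu(gK\cap V_0)\,dg=\int_{V_0}\mu(K\cap U^{-1}y)\,dy$, and since $K\subseteq\overline{V_0}$ the set $V_0$ accumulates on $K$, so the right-hand integrand is positive on a set of positive measure; hence $\mu(gK\cap V_0)>0$ for a positive-measure set $S_0\subseteq U$ of translates, and symmetrically, using $K\subseteq\overline{V_1}$, for some $S_1\subseteq U$. Steinhaus's theorem enters via $WW^{-1}\supseteq U$ to ensure that the neighborhood genuinely links the two sides. The crux is then to force $S_0\cap S_1\neq\emptyset$ and, more generally, to make one $g$ serve all $K_j$ at once; this is where finite VC-dimension is indispensable. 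Because the families $\{gV_0\}$ and $\{gV_1\}$ satisfy the measurability hypotheses (Proposition \ref{prop_measurability}), the $\epsilon$-approximation theorem (Fact \ref{fact_epsilonnet}) applies and provides the uniform control needed to pass from ``a positive fraction of translates straddles on average'' to ``a common translate straddles every $K_j$''.

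The main obstacle is precisely this simultaneity, which is really an \emph{independence} statement in disguise. The purely topological input only says that $\overline{V_0}$ and $\overline{V_1}$ meet, which yields individual crossings such as $\mu(gV_0\cap V_1)>0$ but gives no control over how the $n$ successive cuts interact; nothing seen so far prevents the $2^n$ atoms from collapsing. Converting positivity of $\mu(W)$ into a robust, scale-sensitive independence, so that each new translate splits all current atoms transversally and the fatness invariant survives, is the heart of the matter, and the $\epsilon$-approximation theorem together with Steinhaus's theorem are the tools I expect to carry it.
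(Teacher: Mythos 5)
Your overall frame---pass to the contrapositive, maintain the ``fatness'' invariant that every atom meets $W=\overline{V_0}\cap\overline{V_1}$ in positive measure, and double the configuration at each stage---is essentially the same as the paper's (the paper realizes the invariant by keeping all witness points inside the set $\Delta$ of points every neighborhood of which meets $W$ in positive measure). But there is a genuine gap at exactly the point you yourself flag as the crux: the Straddling Lemma is never proved, and the tool you propose for it cannot do the job. Applying the $\epsilon$-approximation theorem (Fact \ref{fact_epsilonnet}) to the families $\{gV_0: g\in G\}$ and $\{gV_1:g\in G\}$ requires those families to have finite VC-dimension, and this simply does not follow from tameness of the pair $(V_0,V_1)$: tameness is a \emph{joint} condition, requiring each test point to land in $V_0$ or in $V_1$ according to the prescribed pattern, whereas shattering by translates of $V_0$ alone only requires the ``out'' points to avoid $gV_0$, not to lie in $gV_1$. (For instance, $(V_0,\emptyset)$ is a tame pair for \emph{any} open $V_0$, including ones whose translates shatter arbitrarily large sets.) So the invocation of Fact \ref{fact_epsilonnet} is unjustified; and even granting finite VC-dimension, you give no argument for how approximating measures by averages over finitely many sample points would produce a single translate $g$ with $\mu(gK_j\cap V_0)>0$ and $\mu(gK_j\cap V_1)>0$ for all $j$---that is a statement of a different shape, and nothing in the sketch establishes even the case of one $K$ (i.e.\ $S_0\cap S_1\neq\emptyset$). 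The single-set, single-side Fubini computation is fine (modulo routine care with the modular function), but it was never the hard part.

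What the paper does instead is to dissolve the simultaneity problem rather than solve it. The new group element $g_l$ is not found in one shot: it is assembled as a product $g_l=g_l^{2^{l+1}-1}\cdots g_l^{0}$ of small factors, introducing one new witness point at a time. Steinhaus' theorem applied to the positive-measure sets $X_\lambda=W_\lambda\cap\Delta$ (where $W_\lambda$ is a small neighborhood of the old witness $x_\lambda$) yields a neighborhood $U_l\subseteq X_\lambda X_\lambda^{-1}$ of the identity; keeping the accumulated product inside $U_l$ guarantees that it carries \emph{some} point of $X_\lambda$ back into $\Delta\subseteq\overline{V_0}\cap\overline{V_1}$, and then one further, arbitrarily small factor pushes that image into whichever open set $V_\alpha$ is required, while openness preserves every membership already arranged (and membership of the product in $U_l$). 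Only points, never positive-measure intersections with the $V_i$, need to be produced, and no VC-type input is used anywhere in the construction---tameness is only contradicted at the very end. So your instinct that finite VC-dimension is ``indispensable'' for the inductive step inverts the roles of the hypotheses; positive measure alone, exploited through Steinhaus on $\Delta$ and composition of small perturbations, is what carries it. To repair your write-up you would either have to prove the Straddling Lemma from $\mu(W)>0$ alone (which is exactly the difficulty you leave open), or abandon the one-shot splitting in favor of the sequential perturbation argument.
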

\begin{proof}
Set $\delta = \overline{V_0}\cap \overline{V_1}$ and assume that $\mu(\delta)>0$. Let $\Delta=\{x\in \delta : \mu(U\cap \delta)>0$ for all neighborhoods $U$ of $x\}$.

Let $(O_i:i\in \mathbb N)$ be a countable base of open sets of $G$. For any $x\in \delta \setminus \Delta$, there is some $i(x)\in \mathbb N$ such that $x\in O_{i(x)}$ and $\mu(O_{i(x)}\cap \delta)=0$. Then $\delta\setminus \Delta=\bigcup_{x\in \delta\setminus \Delta} O_{i(x)}\cap \delta$ is a countable union of measure 0 sets and hence has measure 0. In particular $\mu(\Delta)>0$ and for $x\in \Delta$ and $U$ a neighborhood of $x$, $\mu(U\cap \Delta)>0$.

Fix some integer $n$ and we will construct inductively points $(g_i:i<n)$ in $G$ and $(x_{\eta}:\eta\in {}^k\{0,1\})$ in $\Delta$, $k\leq n$, such that 
$$\forall l~\forall \eta\in {}^l\{0,1\}~ \forall k<l \quad g_k x_\eta \in V_{\eta(k)}.$$
This will contradict the fact that $(V_0,V_1)$ is tame.

\smallskip\noindent
\underline{Step 0}:
By Steinhaus' theorem, let $U$ be a neighborhood of 1 such that $U\subseteq \Delta \Delta^{-1}$. Let $x_{\la 0\ra}\in \Delta$ be any point. Take some $g_0^0 \in U$ such that $g_0^0x_{\la 0\ra}\in V_0$. By definition of $U$, there is $x_{\la 1\ra}\in \Delta$ such that $g_0^0x_{\la 1\ra}\in \Delta$. Then there is some $g_0^1\in G$ such that $g_0^1g_0^0 x_{\la 1\ra}\in V_1$ and $g_0^1g_0^0x_{\la 0\ra}\in V_0$ (because $V_0$ is open and $g_0^0x_{\la 1\ra}\in \overline{V_1}$). Set $g_0=g_0^1g_0^0$ and we finish step 0.

\smallskip\noindent
\underline{Step $l$}: Assume that we have defined $(x_\eta:\eta\in {}^l\{0,1\})$ and $(g_i:i<l)$. As $V_0,V_1$ are open, we can find open neighborhoods $W_\eta$ of $x_\eta$ such that for any $x' \in W_\eta$ and $k<l$, we have $g_k x' \in V_{\eta(k)}$. Set $X_\eta=W_\eta \cap \Delta$. By construction of $\Delta$, $X_\eta$ has positive measure. Apply Steinhaus' theorem to find some open neighborhood~$U_l$ of 1 such that $U_l \subseteq X_\eta X_\eta^{-1}$ for each $\eta$.

Now enumerate ${}^{l+1}\{0,1\}$ in an arbitrary order as $(\eta_0,\ldots,\eta_{2^{l+1}-1})$. We define elements $(g_l^i:i<2^{l+1})$ and $(x_{\eta_i}:i<2^{l+1})$ inductively such that for all $j\leq i<2^{l+1}$:

$\bullet_0$ $g_l^i\cdots g_l^0\in U_l$;

$\bullet_1$ $g_l^i\cdots g_l^0x_{\eta_j} \in V_{\eta_j(l)}$;

$\bullet_2$ for $\lambda \in {}^l\{0,1\}$ and $\alpha \in \{0,1\}$, $x_{\lambda\hat{~}\alpha}\in X_\lambda$.

\noindent
Once this is done, set $g_l=g_l^{2^{l+1}-1}\cdots g_l^0$ to finish step $l$: condition $\bullet_1$ ensures that multiplication by $g_l$ sends the $x_\eta$'s in the right $V_i$ and $\bullet_2$ ensures this for multiplication by all previous $g_k$'s.

Assume that we have achieved this for all $j<i$. Set $\eta=\eta_i$, $\alpha=\eta(l)$ and $\lambda\in {}^l\{0,1\}$ such that $\eta=\lambda \hat{~}\alpha$. Let $x_\eta\in X_\lambda$ such that $g_l^{i-1}\cdots g_l^0x_\eta\in \Delta$ (possible by $\bullet_0$ and construction of $U_l$). Then find some $g_l^i \in G$ small enough such that:

$g_l^i\cdots g_l^0\in U_l$;

$g_l^i\cdots g_l^0x_{\eta_j}\in V_{\eta_j(l)}$ for each $j<i$;

$g_l^i\cdots g_l^0x_{\eta}\in V_\alpha$.

\noindent
This is possible as $U_l$, $V_0$ and $V_1$ are open and $g_l^{i-1}\cdots g_l^0x_\eta \in \overline{V_\alpha}$. This finishes the construction.
\end{proof}

\begin{lemme}\label{lem_interior}
Let $X\subseteq G$ be a VC-set which is $F_\sigma$. If $\mu(X)>0$, then $X$ has non-empty interior.
\end{lemme}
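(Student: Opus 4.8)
The plan is to argue by contraposition. Assuming $\mathring X=\emptyset$ and $\mu(X)>0$, I would show that the family of left-translates of $X$ has infinite (dual) VC-dimension, contradicting the assumption that $X$ is a VC-set. The whole point is that the finite/infinite dichotomy can be tested by producing, for every $n$, translates $g_0,\ldots,g_{n-1}$ such that all $2^n$ cells of the Venn diagram of $g_0X,\ldots,g_{n-1}X$ are non-empty; so it suffices to build such configurations for arbitrarily large $n$.

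The cleanest way to guarantee non-empty cells is to keep them of positive measure, so I would run a binary-tree construction $n$ levels deep. Starting from a positive-measure ambient set (for which the $F_\sigma$ hypothesis, via inner regularity of $\mu$, lets me take a closed $F\subseteq X$ with $\mu(F)>0$; note $F$ is automatically nowhere dense since $\mathring X=\emptyset$), I maintain at level $l$ a family of positive-measure cells $(C_\eta:\eta\in{}^l\{0,1\})$ of the form $C_\eta=F\cap\bigcap_{k<l}(g_kX)^{[\eta(k)]}$, where $S^{[1]}=S$ and $S^{[0]}=S^c$. To pass from level $l$ to level $l+1$, I need a single translate $g_l$ that splits every current cell non-trivially, i.e.\ $0<\mu(C_\eta\cap g_lX)<\mu(C_\eta)$ for all $\eta$; then $C_{\eta\hat{\ }1}=C_\eta\cap g_lX$ and $C_{\eta\hat{\ }0}=C_\eta\setminus g_lX$ stay of positive measure. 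After $n$ levels the $2^n$ cells are non-empty, giving dual-VC-dimension $\geq n$.

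The main obstacle is exactly this simultaneous-splitting step: given finitely many positive-measure sets $C_1,\ldots,C_m$, produce one $g$ with $0<\mu(C_i\cap gX)<\mu(C_i)$ for every $i$. The useful feature of working with positive-measure cells rather than with individual points is that it sidesteps a genuine difficulty, namely that here \emph{neither} of the two colours ``$\in X$'' and ``$\notin X$'' is open, so one cannot preserve memberships by shrinking to small neighbourhoods as was done in the proof of Theorem \ref{th_bordernil} using the buffer provided by the open sets $V_0,V_1$. Instead, robustness comes entirely from measure: positivity of $\mu(C_i\cap gX)$ and of $\mu(C_i\setminus gX)$ is a stable property. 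I expect the splitting to follow by combining the continuity of $g\mapsto\mu(C_i\cap gX)$ (from continuity of translation in $L^1(\mu)$, as $\mu(F)<\infty$) with the hypothesis that $X$ has empty interior, which prevents a translate $gX$ from covering a given $C_i$ modulo null sets on too large a set of parameters $g$, together with Steinhaus' theorem to relocate within the density points of $F$ — in the same spirit as, and parallel to, the repeated Steinhaus applications organising the inductive step of Theorem \ref{th_bordernil}.

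The delicate point I anticipate, and where the three hypotheses conspire, is verifying that a single $g$ can split all $m$ cells at once: each individual ``splitting'' constraint should hold for $g$ ranging over a suitably large (positive-measure, and on small translates Steinhaus-stable) set of parameters, so that the finite intersection over $i\leq m$ is non-empty. Here $\mu(X)>0$ is what allows Steinhaus to be invoked, empty interior is what forces $gX$ to genuinely cut each cell rather than swallow it, and the $F_\sigma$ assumption is what provides the closed anchor $F$ on which the density arguments are run. Once the splitting step is established, the binary-tree construction yields configurations of every depth $n$, contradicting finite VC-dimension and thereby forcing $\mathring X\neq\emptyset$.
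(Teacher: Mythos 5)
Your argument hinges entirely on the ``simultaneous splitting step'': given finitely many positive-measure cells $C_i$, find a single $g$ with $0<\mu(C_i\cap gX)<\mu(C_i)$ for all $i$. This step is not just left unproven --- the invariant it is meant to maintain (``all $2^l$ cells at level $l$ have positive measure'') is unmaintainable, so the strategy cannot be repaired. Take $G=(\mathbb{R},+)$ and $X=\bigcup_{n}K_n\subseteq[0,1]$, a countable union of fat Cantor sets with $\mu(K_n)\to 1$. This $X$ is $F_\sigma$, has measure $1$, and has empty interior (Baire), so it satisfies every hypothesis of your contrapositive; and by the lemma it genuinely has infinite VC-dimension. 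But $X$ agrees with $[0,1]$ up to a null set, so modulo null sets every translate $gX$ is just the interval $g+[0,1]$: the Venn diagram of $l$ translates therefore has at most $2l+1$ cells of positive measure, and your binary tree provably stalls by level $4$, since $2^l>2l+1$. The shattering configurations that do witness infinite VC-dimension for this $X$ are measure-theoretically invisible --- the ``outside'' points must land in the null set $[0,1]\setminus X$ --- so no argument that certifies non-emptiness of cells by positive measure can find them. Two of your supporting claims fail on related examples: ``empty interior prevents $gX$ from swallowing a cell modulo null sets'' is false for this $X$ (every small translate swallows any cell sitting in the middle of $[0,1]$), and for the set of irrationals (which has positive measure, empty interior, and is a VC-set of VC-dimension $1$, being non-$F_\sigma$) swallowing happens for \emph{every} $g$, showing the splitting step cannot follow from the measure-theoretic hypotheses you invoke.

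A symptom of the problem is that you never locate where $F_\sigma$ does real work: extracting a closed positive-measure subset is possible inside any Borel set by inner regularity, so that use of the hypothesis is vacuous. The paper's proof runs in the opposite direction and uses both hypotheses where they are actually needed. It applies the VC hypothesis \emph{positively}, via the $\epsilon$-approximation theorem (Fact \ref{fact_epsilonnet} together with Proposition \ref{prop_measurability}): fixing a compact symmetric neighborhood $U$ of $1$ and a compact $B'$ with $\mu(X\cap B')=\epsilon>0$, all sets $gX\cap B$ with $g\in U$, $B=UB'$, have measure at least $\epsilon$ inside the finite measure space $B$, so finitely many points $x_0,\ldots,x_{n-1}$ meet every one of them, giving $U\subseteq\bigcup_{i<n}Xx_i^{-1}$. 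Only then does $F_\sigma$ enter, through Baire category: one of the countably many closed pieces of some $Xx_i^{-1}$ must be somewhere dense, hence have interior, hence so does $X$. The examples above show this category-theoretic (rather than measure-theoretic) use of $F_\sigma$ is unavoidable.
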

\begin{proof}
Fix some compact symmetric neighborhood $U$ of the identity. Take a compact set $B'$ such that $\mu(X\cap B')=:\epsilon >0$. Let $B=UB'$. Then $B$ is compact and hence has finite measure. Note that for each $g\in U$, we have $gX\cap B \supseteq g(X\cap B')$, hence $\mu(gX \cap B)\geq \mu(X\cap B')=\epsilon$.

Let $\mathcal F = \{g X \cap B:g\in U\}$ be the family of $U$-translates of $X$ intersected by $B$ so as to remain in a space of finite measure. It has finite VC-dimension as $X$ is a VC-set. Since all the elements of $\mathcal F$ have measure $>\epsilon$, by Fact \ref{fact_epsilonnet} (and Proposition \ref{prop_measurability}) applied to the Haar measure restricted to $B$, there is a finite set of points $\{x_0,\ldots,x_{n-1}\}$ meeting every translate $g X$, $g\in U$. Then for each $g\in U$, there is $k<n$ such that $g^{-1}x_k\in X$. In other words, $U\subseteq \bigcup_{i<n} X\cdot x_i^{-1}$. As each of the sets $X\cdot x_i^{-1}$ is $F_\sigma$, by the Baire property one of them must have non-empty interior and then so does $X$.
\end{proof}

\begin{thm}\label{th_closed}
Let $F\subseteq G$ be a closed VC-set. Then $\mu(\partial F)=0$.
\end{thm}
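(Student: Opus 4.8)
The plan is to derive Theorem \ref{th_closed} directly from the tools already assembled: Theorem \ref{th_bordernil} (tame pairs have negligible common boundary) and Lemma \ref{lem_interior} (an $F_\sigma$ VC-set of positive measure has nonempty interior). The key observation is that for a closed set $F$, the pair $(\mathring F, F^{ext})$ is a tame pair, as already noted right after the definition of tame pairs (this uses VC-duality: if $F$ is a VC-set then arbitrarily large shattering configurations realized by translates would contradict finite dual VC-dimension). So Theorem \ref{th_bordernil} immediately gives $\mu(\overline{\mathring F} \cap \overline{F^{ext}})=0$. The remaining work is to relate this set to $\partial F$ itself.

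First I would unwind the definitions. For $F$ closed we have $\overline F = F$, so $\partial F = F \setminus \mathring F$. The set controlled by Theorem \ref{th_bordernil} is $\overline{\mathring F}\cap\overline{F^{ext}}$, which is generally smaller than $\partial F$: the discrepancy consists of boundary points that fail to lie in the closure of the interior of $F$ (equivalently, points of $\partial F$ having no interior points of $F$ nearby). So the real content is to show that the "extra" part of $\partial F$ not captured by Theorem \ref{th_bordernil} is also null. Let me set $N = \partial F \setminus \overline{\mathring F}$; I need $\mu(N)=0$.

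The natural strategy for this remaining null set is to invoke Lemma \ref{lem_interior}, and here is where I expect the \emph{main obstacle} to lie. The idea is that $N$ consists of points of $F$ having no interior of $F$ in any neighborhood; intuitively such points form a set that is "interior-free" and should therefore have measure zero by Lemma \ref{lem_interior} — provided one can exhibit a relevant $F_\sigma$ VC-set of positive measure with empty interior to get a contradiction. Concretely, I would consider $N$ (or a measure-theoretic variant of it) and argue that if $\mu(N)>0$ then Lemma \ref{lem_interior} forces $N$ to have nonempty interior, contradicting the fact that near any point of $N$ the interior of $F$ is empty. The technical friction is that $N$ need not itself be a VC-set or $F_\sigma$, so I would instead work with a closed set built from $F$: for instance, consider the set $\overline{\mathring F}$, note it is closed hence $F_\sigma$, and show that $F \setminus \overline{\mathring F}$ has empty interior by construction (any open subset of $F$ lies in $\mathring F \subseteq \overline{\mathring F}$), then apply Lemma \ref{lem_interior} to the $F_\sigma$ VC-set $F\setminus\overline{\mathring F}$ to conclude it has measure zero. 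One must check that $F\setminus\overline{\mathring F}$ is indeed a VC-set — this should follow because it is a boolean combination built from $F$ and translate-definable data, so its family of translates still has finite VC-dimension.

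Assembling the pieces: $\partial F = N \cup (\partial F \cap \overline{\mathring F})$, where the second piece is contained in $\overline{\mathring F}\cap\overline{F^{ext}}$ (since boundary points lie in $\overline{F^{ext}}$) and hence is null by Theorem \ref{th_bordernil}, while $N$ is null by the Lemma \ref{lem_interior} argument above. Therefore $\mu(\partial F)=0$. I expect the genuinely delicate point to be verifying that the auxiliary set $F\setminus\overline{\mathring F}$ is a VC-set and $F_\sigma$ so that Lemma \ref{lem_interior} applies cleanly; the rest is point-set topology bookkeeping.
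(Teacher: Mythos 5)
Your overall architecture is sound in its first half: the decomposition $\partial F = \bigl(\partial F\cap \overline{\mathring F}\bigr)\cup N$ with $N=F\setminus\overline{\mathring F}$, the observation that for closed $F$ one has $F^c=F^{ext}$ so that $\partial F\cap\overline{\mathring F}\subseteq \overline{\mathring F}\cap\overline{F^{ext}}$, and the use of Theorem \ref{th_bordernil} (via the tame pair $(\mathring F, F^{ext})$, which does follow from VC-duality) to make that piece null --- all of this is correct, as are the facts that $N$ is $F_\sigma$ (closed intersect open, and open sets are $F_\sigma$ here) and has empty interior. The gap is exactly where you suspected it: the claim that $N=F\setminus\overline{\mathring F}$ is a VC-set, which you need in order to invoke Lemma \ref{lem_interior}. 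Your justification --- that $N$ is ``a boolean combination built from $F$ and translate-definable data'' --- is not valid: $\overline{\mathring F}$ is obtained from $F$ by topological hull operations (interior, then closure), not by boolean combinations of translates of $F$, and the VC property is \emph{not} preserved by such operations. The paper's own counterexample following this theorem makes the point: the discrete set $X$ sitting in the gaps of a fat Cantor set $K$ is a VC-set, yet its closure $\overline X=K\cup X$ cannot be one, since $\overline X$ is closed and nowhere dense with $\mu(\partial \overline X)=\mu(\overline X)\geq\mu(K)>0$, contradicting the very theorem you are proving if $\overline X$ were VC. So closure (hence also interior, by taking complements) can destroy the VC property, and the VC-ness of $N$ is at best an unproved claim that carries essentially the whole weight of the theorem; no soft argument of the kind you sketch can deliver it.

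The paper's proof avoids ever needing a derived set like $N$ or $\mathring F$ to be VC; the only VC family it uses is the family of translates of $F$ itself. It runs the Lemma \ref{lem_interior} argument \emph{locally at measure-density points of the boundary}: assuming $\mu(\partial F)>0$, it passes to the set $\Delta F$ of points $x\in\partial F$ all of whose neighborhoods meet $\partial F$ in positive measure (so $\mu(\Delta F)>0$), and then, for $x\in\Delta F$ and arbitrarily small compact neighborhoods $U'$ of $x$, notes that $\mu(F\cap U')\geq\mu(\Delta F\cap U')>0$; by left-invariance every translate $gF$, $g\in W$, still has measure bounded below inside $U=WU'$, so the $\epsilon$-approximation theorem plus Baire (applied to the closed set $F$) shows $F$ has nonempty interior arbitrarily close to $x$, i.e.\ $\Delta F\subseteq\overline{\mathring F}$. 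Since also $\Delta F\subseteq\overline{F^c}$, Theorem \ref{th_bordernil} applied to the tame pair $(\mathring F, F^c)$ gives $\mu(\Delta F)=0$, a contradiction. In short: where you try to globalize Lemma \ref{lem_interior} by applying its \emph{statement} to an auxiliary set (whose VC-ness is the missing ingredient), the paper localizes the lemma's \emph{proof} at density points, using only the hypothesis actually available --- that $F$ is a VC-set.
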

\begin{proof}
Assume for a contradiction that $\mu(\partial F)>0$ and define $\Delta F=\{x\in \partial F: \mu(U\cap \partial F)>0$ for all neighborhoods $U$ of $x\}$. Then as in the proof of Theorem \ref{th_bordernil}, $\mu(\partial F \setminus \Delta F)=0$ and $\mu(\Delta F)>0$. Let $x\in \Delta F$ and let $U'$ be a compact neighborhood of $x$. Then $r:=\mu(F\cap U')\geq \mu(\Delta F\cap U')>0$. Take also $W$ a symmetric compact neighborhood of 1 and let $U=WU'$, so that for each $g\in W$, $\mu(gF\cap U)\geq r$. The family $\{gF\cap U: g\in W\}$ has finite VC-dimension and is included in the finite measure space $U$. We conclude as in Lemma \ref{lem_interior}: by the VC-theorem, there is a finite set $\{x_0,\ldots,x_{n-1}\}\subseteq U$ meeting every $g F$, $g\in W$. Then for some $i$, $Fx_i^{-1} \cap W$ has non-empty interior. Hence $F\cap WU$ has non-empty interior. As we can take $U$ and $W$ arbitrarily small, $WU$ is an arbitrarily small neighborhood of $x$, and hence $x\in \overline{\mathring F}$.

The assumption on $F$ implies that $(\mathring F,F^{c})$ is a tame pair. By the previous theorem, $\delta:=\overline{\mathring F} \cap \overline{F^{c}}$ has measure 0. But we have shown that $\Delta F\subseteq \delta$, hence also $\Delta F$ has measure 0.
\end{proof}

In the statement above, one cannot replace \emph{closed} by \emph{constructible} (finite boolean combination of closed sets). Here is a counterexample in $(\mathbb R,+)$: take $K\subseteq \mathbb R$ a Cantor set of positive Lebesgue measure. Write $\mathbb R\setminus K$ as a countable union of disjoint intervals $((a_i,b_i):i\in \mathbb N)$. For each $i\in \mathbb N$ consider an increasing sequence $(c_k^i:k\in \mathbb Z)$ in $(a_i,b_i)$ whose limits at $\pm \infty$ are respectively $a_i$ and $b_i$. Let $X$ be the union of all those sequences. Then $X$ is constructible, indeed locally closed, because it is discrete. Also $\partial X=\overline X=K\cup X$. In particular $\mu(\partial X)>0$. To make $X$ into a VC-set simply choose the points $c_k^i$ so that the map $X\times X\to \mathbb R$, $(x,y)\mapsto y-x$ is injective. Then it is easy to see that the family of translates of $X$ cannot shatter a set of size 3.

However, if $X$ is a finite boolean combination of closed VC-sets, then the theorem does hold for $X$, because the border of a boolean combination is included in the union of the borders of the sets in question.

We now state a second version of the theorem which applies in particular for constructible sets, under an extra assumption.

\begin{thm}\label{th_constr}
Let $X$ be a VC-set and assume that the following condition holds both for $Y=X$ and $Y=X^c$:

\quad $Y$ is $F_\sigma$ and for all $x\in Y$ and neighborhood $U$ of $x$, $\mu(U\cap Y)>0$.

\noindent
Then $\mu(\partial X)=0$.
\end{thm}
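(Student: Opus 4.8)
The plan is to run the proof of Theorem~\ref{th_closed} essentially verbatim, checking that each appeal to the closedness of $F$ can be replaced by one of the two hypotheses now assumed (for $Y=X$ and, by symmetry, for $Y=X^c$). First I would assume for contradiction that $\mu(\partial X)>0$ and pass to the \emph{self-dense core} $\Delta X=\{x\in\partial X:\mu(U\cap\partial X)>0 \text{ for every neighborhood } U \text{ of } x\}$. The countable-base argument from the proof of Theorem~\ref{th_bordernil} shows $\mu(\partial X\setminus\Delta X)=0$, so $\mu(\Delta X)>0$, and moreover every neighborhood of a point of $\Delta X$ meets $\Delta X$ in positive measure. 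The target is to show $\Delta X\subseteq\overline{\mathring X}\cap\overline{X^{ext}}$: since $X$ is a VC-set, $(\mathring X,X^{ext})$ is a tame pair, so Theorem~\ref{th_bordernil} gives $\mu(\overline{\mathring X}\cap\overline{X^{ext}})=0$, contradicting $\mu(\Delta X)>0$.

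The heart of the argument is the claim that $\Delta X\subseteq\overline{\mathring X}$ for any $F_\sigma$ VC-set satisfying the density hypothesis for $Y=X$. To prove it I would fix $x\in\Delta X$ and a compact neighborhood $U'$ of $x$, and first secure $r:=\mu(X\cap U')>0$. This is the one place where closedness was used in Theorem~\ref{th_closed} (there $\Delta F\subseteq\partial F\subseteq F$ gave it for free); here instead $x\in\partial X\subseteq\overline X$ forces $\mathring{U'}$ to contain a point of $X$, and the density hypothesis for $Y=X$ at that point yields $\mu(\mathring{U'}\cap X)>0$. With $r>0$ in hand, the rest copies Lemma~\ref{lem_interior} and Theorem~\ref{th_closed}: take a symmetric compact neighborhood $W$ of $1$, set $U=WU'$, note $\mu(gX\cap U)\geq r$ for all $g\in W$ by left-invariance, and apply Fact~\ref{fact_epsilonnet} (with Proposition~\ref{prop_measurability}) to the finite-measure space $U$ to extract a finite set hitting every $gX$, $g\in W$. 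Symmetry of $W$ gives $W\subseteq\bigcup_i Xx_i^{-1}$, and since $X$ is $F_\sigma$ each $Xx_i^{-1}$ is $F_\sigma$, so by Baire one of them has non-empty interior; translating back produces interior points of $X$ inside $W^2U'$. Letting $U'$ and $W$ shrink, these interior points accumulate at $x$, i.e.\ $x\in\overline{\mathring X}$.

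Finally I would symmetrize. The complement $X^c$ is again a VC-set (the translates of $X^c$ are the complements of the translates of $X$, with the same VC-dimension), it is $F_\sigma$ and positive-density by hypothesis, and $\partial(X^c)=\partial X$, hence $\Delta(X^c)=\Delta X$. Applying the claim to $X^c$ therefore gives $\Delta X=\Delta(X^c)\subseteq\overline{\mathring{X^c}}=\overline{X^{ext}}$. Combining with the claim for $X$ yields $\Delta X\subseteq\overline{\mathring X}\cap\overline{X^{ext}}$, and the contradiction follows as above.

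I expect the only real obstacle to be the step $\mu(X\cap U')>0$, which is exactly where the extra hypothesis is indispensable: the constructible counterexample in the text fails precisely because its boundary consists of points with zero local $X$-density, so no analogue of Steinhaus can be started there. Everything downstream---the $\epsilon$-approximation extraction, the Baire step, and the reduction to the tame pair $(\mathring X,X^{ext})$---is a direct transcription of the already-established closed case, so I do not anticipate further difficulties.
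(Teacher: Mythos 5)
Your proof is correct and follows essentially the same route as the paper: reduce to the tame pair $(\mathring X, X^{ext})$ via Theorem \ref{th_bordernil}, and show that border points lie in $\overline{\mathring X}\cap\overline{X^{ext}}$ by the VC-theorem-plus-Baire argument of Theorem \ref{th_closed}, with the density hypothesis supplying the positive-measure seed $\mu(X\cap U')>0$. The only difference is your detour through $\Delta X$, which is superfluous (though harmless): since your positivity step uses only $x\in\overline{X}$ and the density hypothesis, not the local positivity of $\mu$ on $\partial X$, your argument in fact shows the inclusion for every point of $\partial X$, which is exactly what the paper does directly.
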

\begin{proof}
As $X$ is a VC-set, the pair $(\mathring X, X^{ext})$ is tame. By Theorem \ref{th_bordernil}, it is enough to show that $\partial X = \overline{\mathring X} \cap \overline{X^{ext}}$. Let $x\in \partial X$. By symmetry of the roles of $X$ and $X^c$ it is enough to show that $x\in \overline{\mathring X}$. Assume first that $x\in X$. Take $U$ a neighborhood of $x$ of finite measure. Then $X\cap U$ has positive measure by assumption. By the same reasoning as in Theorem \ref{th_closed}, we conclude that $X\cap U$ has non-empty interior, which gives what we want. Now assume that $x\in X^{c}$. Let $U$ be a neighborhood of $x$. As $x\in \partial X$, there is some $x'\in U\cap X$. Then by the argument in the first case, $x'$ is in $\overline{\mathring X}$. As $U$ was arbitrary, also $x$ is in $\overline{\mathring X}$.
\end{proof}

\section{Generic compact domination}

In this section, we apply the previous results to prove a conjecture in model theory concerning definably amenable NIP groups. We assume familiarity with model theory and in particular NIP theories.

Let $T$ be an NIP theory and $M_0\models T$ a small model. Let also $G$ be an $M_0$-definable group. We assume that $G$ is definably amenable, which means that there exists a $G$-invariant Keisler measure on $G$. See \cite[Chapter 8]{NIPBook}, \cite{tamedyn} for background on this notion.

Recall that $G$ admits a smallest type-definable group of bounded index $G^{00}$ and that the quotient $K=G/G^{00}$ equipped with the logic topology is a compact Hausdorff group. We let $h$ denote the normalized Haar measure on it.

Recall the notion of f-generic type (in the sense of \cite{tamedyn}): a global type $p$ concentrating on $G$ is f-generic if for any $\phi(x)\in p$, there is a small model $M$ such that no (left-)translate of $\phi(x)$ forks over $M$. Equivalently, $p$ is $G^{00}$-invariant (by left translations). Such a type gives rise to an invariant measure $\mu_p$ on $G$ defined by $\mu_p(\phi)= h(\{\bar g\in G/G^{00} : \bar g\cdot p \vdash \phi(x)\})$ for any definable subset $\phi(x)$ of $G$. If $p$ is weakly-random for $\mu_p$---which means that $\mu_p(X)>0$ for any $X\in p$---then we call $p$ an almost periodic type. This is equivalent to asking that $\overline{G \cdot p}$, the closure of the orbit of $p$ in $S(\monster)$, is a minimal $G$-invariant closed subset of $S(\monster)$. All this is explained in \cite{tamedyn}.

Let $p$ be an almost periodic type concentrating on $G^{00}$, and let $\pi: \overline{G\cdot p} \to G/G^{00}$ be the canonical projection. For $\phi(x)$ a definable set of $G$---which we identify with the corresponding clopen subset of $S(\monster)$---let $X=d_p(\phi)\subseteq G/G^{00}$ be the set of $\bar g\in G/G^{00}$ such that $\bar g\cdot p\vdash \phi(x)$. Finally, define
$$E_\phi=\{\bar g\in G/G^{00} : \pi^{-1}(\bar g)\cap \phi\neq \emptyset\text{ and }\pi^{-1}(\bar g)\cap \neg\phi \neq \emptyset\}.$$

The following is shown in \cite{tamedyn}.

\begin{fait}\label{fact_bairedom}
We have the inclusion $E_\phi \subseteq \partial X$ and the set $E_\phi$ is a closed meager set.
\end{fait}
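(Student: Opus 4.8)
The plan is to split the statement into its three assertions --- the inclusion $E_\phi\subseteq\partial X$, the closedness of $E_\phi$, and its meagerness --- and to treat the first two by soft topological arguments and the third by a shattering argument modelled on Theorem \ref{th_bordernil}.

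For the inclusion I would argue as follows. Fix $\bar g\in E_\phi$ and pick $q^+\in\pi^{-1}(\bar g)\cap\phi$ and $q^-\in\pi^{-1}(\bar g)\cap\neg\phi$. Since $q^+$ lies in $\overline{G\cdot p}$, it is the limit of a net $(g_i\cdot p)$; continuity of $\pi$ gives $\bar g_i\to\bar g$, and as $\phi$ is clopen with $q^+\in\phi$ we have $g_i\cdot p\in\phi$ eventually, i.e.\ $\bar g_i\in X$ by $G^{00}$-invariance of $p$. Hence $\bar g\in\overline X$. Running the same argument with $q^-$ and the open set $\neg\phi$ gives $\bar g\in\overline{X^c}$, so $\bar g\in\overline X\cap\overline{X^c}=\partial X$. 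For closedness, set $\phi'=\phi\cap\overline{G\cdot p}$ and $\neg\phi'=\neg\phi\cap\overline{G\cdot p}$; these are clopen in the compact space $\overline{G\cdot p}$, hence compact, so their images $\pi(\phi')$ and $\pi(\neg\phi')$ are compact and therefore closed in the Hausdorff group $K$. Since $\bar g\in\pi(\phi')$ exactly when $\pi^{-1}(\bar g)$ meets $\phi$, one has $E_\phi=\pi(\phi')\cap\pi(\neg\phi')$, which is closed.

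The substantial point is meagerness, and here I would exploit that $E_\phi$ is closed: a closed set is meager iff it has empty interior. So suppose for contradiction that some nonempty open $O\subseteq K$ is contained in $E_\phi$; then every fibre $\pi^{-1}(\bar g)$ with $\bar g\in O$ is split by $\phi'$, containing both a point of $\phi'$ and a point of $\neg\phi'$. From this I would manufacture, for arbitrarily large $n$, elements $g_0,\dots,g_{n-1}\in G$ and types $(q_\eta:\eta\in{}^n\{0,1\})$ in $\overline{G\cdot p}$ with $\pi(q_\eta)\in O$ and $g_k\cdot q_\eta\in\phi'\Leftrightarrow\eta(k)=0$. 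Such a configuration shatters, in the dual sense, the family of left-translates $\{g\phi:g\in G\}$, contradicting NIP, since that family has finite VC-dimension.

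The construction itself is a transcription of the proof of Theorem \ref{th_bordernil} into the flow $\pi:\overline{G\cdot p}\to K$: the clopen sets $\phi'$ and $\neg\phi'$ play the role of the open sets $V_0,V_1$, the open set $O$ plays the role of the positive-measure density set $\Delta$, and the split-fibre condition over $O$ is the analogue of sitting in $\overline{V_0}\cap\overline{V_1}$, since within each fibre over $O$ both a $\phi'$-point and a $\neg\phi'$-point are available to branch on. One advantage of working with an honest open set $O$ rather than a merely positive-measure set is that $OO^{-1}$ is automatically a neighbourhood of the identity, so Steinhaus' theorem is not needed to control the base points. I would then run the inductive branching of Theorem \ref{th_bordernil}: at stage $l$ I extend each previously chosen type along the new coordinate by selecting a fibre-point on the prescribed side of $\phi'$ and applying a group element close enough to the identity --- hence keeping all base points inside $O$ --- to push it into the correct clopen region while preserving the placements made by $g_0,\dots,g_{l-1}$; openness of $\phi'$ and $\neg\phi'$ is what makes these small perturbations legitimate. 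I expect the main obstacle to be precisely this bookkeeping, namely tracking the fibre structure of $\pi$ and simultaneously preserving all earlier conditions under each new small translation, which is the exact analogue of the delicate inductive step of Theorem \ref{th_bordernil} carried out one level up in the flow.
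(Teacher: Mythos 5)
Your treatment of the first two assertions is correct: the net argument (continuity of $\pi$ plus openness of the clopen set $[\phi]$) does give $E_\phi\subseteq\partial X$, and the identity $E_\phi=\pi(\phi')\cap\pi(\neg\phi')$ with $\phi',\neg\phi'$ compact gives closedness. These match the standard arguments.

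The meagerness argument, however, has a gap that cannot be repaired along the route you propose. First, a reduction that shows why: translation by a fixed $g$ is a homeomorphism of $S_G(\monster)$ preserving $\overline{G\cdot p}$, and $\phi'$ is clopen in $\overline{G\cdot p}$, so for fixed $g_0,\ldots,g_{n-1}$ and $\eta\in{}^n\{0,1\}$ the set of types $q$ with $g_k\cdot q\in\phi'\iff\eta(k)=0$ (and $\pi(q)\in O$) is open in $\overline{G\cdot p}$; since $G\cdot p$ is dense there, this set is nonempty if and only if it contains an actual translate $\bar h\cdot p$, i.e.\ if and only if the Venn cell $\bigcap_{k<n}\bar g_k^{-1}X^{(\eta(k))}$ (where $X^{(0)}=X$, $X^{(1)}=X^c$) meets $O$. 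So working with types in the flow buys you nothing: your configuration exists for all $n$ exactly when the translates of $X$ inside $K$ have infinite dual VC-dimension. Moreover your hypothesis is exactly equivalent to "$X$ and $X^c$ are both dense in $O$": one direction is your inclusion argument, and the converse follows by passing to a convergent subnet inside the compact set $\phi'$ (in fact $E_\phi=\partial X$). Thus any proof of your shape would establish: a VC-set that is dense and codense in a nonempty open subset of a compact group has translates of infinite dual VC-dimension. That statement is false. In $K=\mathbb R/\mathbb Z$ take $X=\mathbb Q/\mathbb Z$: it is dense and codense, but its translates are the cosets of a subgroup, any two of which are equal or disjoint, so the family has VC-dimension $1$ and already for $n=2$ no two translates have all four Venn cells nonempty. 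Hence no bookkeeping can make the induction close.

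The concrete point where the transcription of Theorem \ref{th_bordernil} breaks is the branching step. There, a single point of $\Delta$ lies in $\overline{V_0}\cap\overline{V_1}$, so the \emph{same} point can be pushed into $V_0$ or into $V_1$ by arbitrarily small group elements, and Steinhaus is applied not once to $\Delta$ but at every stage to the positive-measure sets $W_\eta\cap\Delta$, precisely so that the partial products land back inside $\Delta$, where both pushes remain available. In your setting $\phi'$ and $\neg\phi'$ partition $\overline{G\cdot p}$ into clopen pieces, so a given type sits on exactly one side; splitness of the fiber over $\pi(q)$ supplies a \emph{different} type on the other side, not a small translate of $q$ itself, and nothing guarantees the existence of even one type that arbitrarily small translations can send to both sides, let alone a Steinhaus-like abundance of them in every clopen set $N_\lambda$ your induction produces. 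The missing ingredient---and the one the cited proof in \cite{tamedyn} actually uses, mirrored in Claim 1 of Theorem \ref{th_compactdom}---is that $X=d_p(\phi)$ is \emph{constructible}, by Borel definability of invariant types in NIP theories. Given constructibility, meagerness is soft topology with no shattering argument at all: the border of a finite boolean combination of closed sets is contained in the union of their borders, hence is closed with empty interior, hence nowhere dense; combined with your inclusion and closedness arguments this proves the Fact.
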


We can now state the compact domination theorem.

\begin{thm}\label{th_compactdom}
The set $E_\phi$ has Haar measure 0.
\end{thm}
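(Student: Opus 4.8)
The plan is to reduce the statement to the border theorems of Section~2 applied to the compact group $K=G/G^{00}$ equipped with its Haar measure $h$. Write $X=d_p(\phi)\subseteq K$. By Fact~\ref{fact_bairedom} we have $E_\phi\subseteq\partial X$, so it suffices to control $\partial X$; the strategy is to replace $X$ by two genuinely \emph{closed} sets to which Theorem~\ref{th_closed} applies. One preliminary issue: the theorems of Section~2 require second countability, whereas $K$ need not be second countable. Since every object in sight is determined by the single formula $\phi$ and the type $p$, I would first pass to a metrizable compact quotient $K\to K'$ through which the relevant closed sets and the measure $h$ factor, prove the result in $K'$, and pull back along the (measure-preserving) projection; $K$ is compact, hence locally compact, so the remaining hypotheses on the ambient group are then met.

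Next I would introduce $C=\pi(\phi\cap\overline{G\cdot p})$ and $C'=\pi(\neg\phi\cap\overline{G\cdot p})$. Since $\pi$ is a continuous surjection of compact Hausdorff spaces it is a closed map, so $C$ and $C'$ are closed; one checks $C=\overline X$ and $C'=\overline{X^c}$, that $C\cup C'=K$ (every fibre meets $\phi$ or $\neg\phi$), and that $E_\phi\subseteq C\cap C'$. A short compactness argument handles the interiors: if a coset $\bar g$ is not in $E_\phi$, its whole fibre $\pi^{-1}(\bar g)$ lies in the clopen set $\phi$ or in $\neg\phi$; in the first case closedness of $\pi$ produces a neighbourhood of $\bar g$ mapping into $X$, so $\bar g\in\mathring X$ and hence $\bar g\notin\overline{X^c}=C'$, and symmetrically in the second case. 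Thus $\mathring C\cap\mathring{C'}\subseteq E_\phi$. Since $E_\phi$ is meager (Fact~\ref{fact_bairedom}) and $K$ is a Baire space, $E_\phi$ has empty interior, so the open set $\mathring C\cap\mathring{C'}$ is empty. It follows that $E_\phi\subseteq C\cap C'\subseteq\partial C\cup\partial C'$.

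Granting that $C$ and $C'$ are VC-sets, Theorem~\ref{th_closed} gives $h(\partial C)=h(\partial C')=0$, whence $h(E_\phi)=0$, as desired. So the whole matter reduces to the VC property of the closed projections. Here I would argue model-theoretically: membership is governed by $\bar k\in\bar h C\iff\pi^{-1}(\bar k)\cap\bar h\phi\neq\emptyset$, and a shattering of cosets by translates of $C$ can be pushed, through limits of orbit points $\bar g\cdot p$ together with the closedness of $\pi$, to a shattering by instances of the single formula $\phi$. As $T$ is NIP, $\phi$ has finite VC-dimension, and this contradiction bounds the VC-dimension of the family of translates of $C$ (and likewise of $C'$).

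I expect this last point to be the main obstacle. Membership in $C$ is an \emph{existential} condition over the fibres of $\pi$, and existential quantification does not preserve finite VC-dimension in general (indeed the closure of a VC-set need not be a VC-set, as the Cantor-set example after Theorem~\ref{th_closed} shows); the argument must genuinely use that the fibres of $\pi$ are orbit closures controlled by the very same NIP formula $\phi$, so that the shattering descends to $\phi$ itself. The second-countability reduction of the first paragraph also requires some care. An alternative route, trading this difficulty for another, is to apply Theorem~\ref{th_constr} directly to $X$ and $X^c$: one checks that $X=d_p(\phi)$ is a VC-set (its translates being cut out by $\phi(k^{-1}hx)\in p$, again from NIP), that $X$ and $X^c$ are $F_\sigma$ in the logic topology, and that each charges every neighbourhood of each of its points, the last deduced from the fact that $p$ is almost periodic, i.e.\ weakly random for $\mu_p=h\circ d_p$, so that every translate $\bar g\cdot p$ is again weakly random.
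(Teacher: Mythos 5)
Your primary route breaks down exactly where you flag it, and the reduction you sketch does not repair it. Writing $C=\overline X=\pi(\phi\cap\overline{G\cdot p})$, membership in a translate is an existential over the fibre: $\bar h^{-1}\bar k\in C$ iff \emph{some} f-generic type $q\in\overline{G\cdot p}$ concentrating on $G^{00}$ satisfies $\bar h^{-1}\bar k\cdot q\vdash\phi$. If you try to push a shattering of $\bar k_1,\ldots,\bar k_n$ by translates $\bar h_I C$ down to instances of $\phi$, the witness $q_{I,j}$ depends on the pair $(I,j)$, not on $I$ alone: realizing the witnesses by elements $a_{I,j}$ you only get $\models\phi(h_I^{-1}k_j a_{I,j})$ for $j\in I$, and $\models\neg\phi(h_I^{-1}k_j a_{I',j'})$ for $j\notin I$ (all witnesses). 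That is a family of $2^n$ pairwise distinct traces of the NIP formula $\phi(z^{-1}xy)$ on a set of roughly $n2^{n}$ points, which is perfectly compatible with the Sauer--Shelah bound, so no contradiction with NIP arises. Nor is there a soft fix: the Cantor-set example following Theorem \ref{th_closed} shows that the closure of a VC-set genuinely need not be a VC-set, and the paper is structured precisely to avoid ever needing such a statement --- the only tame object it uses is the \emph{open} pair $(\mathring X,X^{ext})$, whose tameness does follow from $X$ being a VC-set, with the positive-measure-of-neighbourhoods hypothesis of Theorem \ref{th_constr} substituting for closedness. (Note also that $\partial\overline{X}\subseteq\partial X$ and $\partial\overline{X^c}\subseteq\partial X$, so your conclusion $E_\phi\subseteq\partial C\cup\partial C'$ is in any case no stronger than Fact \ref{fact_bairedom}; all the content of your route sits in the unproved VC claim.)

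The ``alternative route'' in your closing sentences is the paper's actual proof, and it is what you should have developed as the main argument. The paper first reduces to a countable sublanguage $L_0$ containing $\phi$ and the group data, so that $G/G^{00}_{L_0}$ is second countable and its Haar measure is the pushforward of $h$ (this is the clean implementation of your metrizable-quotient reduction). It then proves: (i) $X=d_p(\phi)$ is a VC-set, by exactly your $\phi(z^{-1}xy)$ argument; (ii) $X$ is \emph{constructible}, hence $X$ and $X^c$ are $F_\sigma$ --- this rests on Borel definability of invariant types in NIP theories, the ingredient your sketch is silent about when it says ``one checks that $X$ and $X^c$ are $F_\sigma$''; (iii) for $x_*\in X$ and a neighbourhood $U$ of $x_*$, $h(U\cap X)>0$: by definition of the logic topology one interpolates a definable set $\psi$ with $\pi^{-1}(\{x_*\})\subseteq\psi\subseteq\pi^{-1}(U)$, then $g\cdot p\vdash\psi\wedge\phi$ for $g$ projecting to $x_*$, and weak randomness of the translates of $p$ (your observation) gives $\mu_p(\psi\wedge\phi)>0$, i.e. $h(d_p(\psi)\cap X)>0$ with $d_p(\psi)\subseteq U$. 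The interpolation step is the detail your sketch omits; it is what converts almost periodicity into a statement about arbitrary open neighbourhoods. Replacing $\phi$ by $\neg\phi$ handles $X^c$, and Theorem \ref{th_constr} together with Fact \ref{fact_bairedom} concludes. In short: the fallback you mention in passing is correct and is the paper's proof; the route you actually propose has a fatal gap at its central claim.
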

\begin{proof}
First we show that we may restrict to the case where $L$ is countable. Let $L_0$ be a countable sublanguage containing $\phi$ and the formulas defining $G$ and its group structure. Let $G^{00}_{L_0}$ be $G^{00}$ in the sense of $L_0$. We have a canonical surjection $\pi_0:G/G^{00} \to G/G^{00}_{L_0}$. Let $E^0_\phi$ be the set $E_\phi$ in the sense of $L_0$. Then $E_\phi \subseteq \pi_0^{-1}(E^0_\phi)$. Also the Haar measure on $G/G^{00}_{L_0}$ is the pushforward of the Haar measure on $G/G^{00}$ (by uniqueness). It is now enough to show that $E^0_\phi$ has Haar measure 0. Therefore we may replace $L$ by $L_0$ and assume that $L$ is countable. Then $G/G^{00}$ is a second countable compact group.

\underline{Claim 1}: $X=d_p(\phi)$ is a constructible VC-set in $G/G^{00}$.

Proof: The fact that $d_p(\phi)$ is constructible was shown in \cite{tamedyn}, it easily follows from Borel-definability of invariant types in NIP theories (Proposition 2.6 in \cite{NIP2}). To see that $d_p(\phi)$ is a VC-set, let $\{x_0,\ldots,x_{n-1}\}\subseteq K$ be a finite set shattered by the family of translates of $d_p(\phi)$. For $i\subseteq n$, let $y_i$ be such that $x_k \in y_i\cdot d_p(\phi)\iff k\in i$.

For each $k<n$, $i\subseteq n$, pick representatives $g_k$ of $x_k$ and $h_i$ of $y_i$ in $G(\monster)$. Finally, let $a$ realize $p$ over all those points. We see that 
$$\models \phi(h_i^{-1}g_ka) \iff k\in i.$$

This shows that $n$ is at most the VC-dimension of the formula $\psi(xy;z)=\phi(z^{-1}\cdot x\cdot y)$ and concludes the proof of the claim.

\smallskip
\underline{Claim 2}: For $x_*\in X$ and $U\subseteq G/G^{00}$ an open neighborhood of $x_*$, $h(U\cap X)>0$.

Proof: By construction of the topology on $G/G^{00}$, there is a definable set $\psi(x)\in L(M_0)$ such that $\pi^{-1}(\{x_*\})\subseteq \psi(x)\subseteq \pi^{-1}(U)$. Take $g\in G(\monster)$ projecting to $x_*$. As $p$ concentrates on $G^{00}$, we have $\pi(g\cdot p)=x_*$ and $g\cdot p\vdash \psi(x)$. Therefore $g\cdot p\vdash \psi(x)\wedge \phi(x)$ and hence $\mu_p(\psi(x)\wedge \phi(x))>0$. This exactly means that $h(d_p(\psi)\cap d_p(\phi))>0$. As $p$ concentrates on $G^{00}$, we have $d_p(\psi)\subseteq U$ and thus $h(U\cap X)>0$.

\smallskip
Replacing $\phi(x)$ by $\neg \phi(x)$ we obtain the same result for $X^c$. Now we can apply Theorem \ref{th_constr} to obtain $\mu(\partial X)=0$. Then also $\mu(E_\phi)=0$ by Fact \ref{fact_bairedom}.
\end{proof}

\subsection{$G(M_0)$-invariant measures}

We keep notations as above: $G$ is a definably amenable NIP group defined over some model $M_0$. If $\mu(x)$ is a measure over $M$, then the support of $\mu$ is the (closed) set of types $p\in S_x(M)$ such that $p\vdash \phi(x) \Longrightarrow \mu(\phi(x))>0$.

We will need the following facts. First an easy property of measures.

\begin{fait}[\cite{tamedyn}, Proposition 3.15]\label{fact_g00}
A global measure $\mu$ on $G$ is $G^{00}$-invariant if and only if all types in its support are f-generic.
\end{fait}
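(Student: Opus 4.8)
The plan is to prove the two implications separately, viewing the Keisler measure $\mu$ as a regular Borel probability measure on the type space $S_G(\monster)$, on which $G$ acts by homeomorphisms via left translation. Under this identification the model-theoretic support of $\mu$ coincides with the topological support (the smallest closed set of full measure), since $p$ lies in the support exactly when every basic clopen neighborhood $\phi \ni p$ has $\mu(\phi) > 0$. Throughout I use the equivalence recalled in the text: a global type is f-generic if and only if it is $G^{00}$-invariant.

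For the direction ``support f-generic $\Rightarrow$ $\mu$ is $G^{00}$-invariant'' I would argue purely dynamically. Fix $g \in G^{00}$; I want $g_*\mu = \mu$, and since both are regular Borel measures it suffices to compare them on a Borel set $A$. By hypothesis every $q \in \mathrm{supp}(\mu)$ is $G^{00}$-invariant, hence fixed by the homeomorphism $g$; thus $g$ fixes $S:=\mathrm{supp}(\mu)$ pointwise. Using $\mu(G \setminus S) = 0$ one has $\mu(A) = \mu(A \cap S)$ and $g_*\mu(A) = \mu(g^{-1}A) = \mu(g^{-1}A \cap S)$; but for $q \in S$ the condition $q \in g^{-1}A$ reads $g\cdot q \in A$, i.e. $q \in A$ since $g\cdot q = q$. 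Hence $g^{-1}A \cap S = A \cap S$ and $g_*\mu(A) = \mu(A)$. This direction is routine.

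The other direction, ``$\mu$ $G^{00}$-invariant $\Rightarrow$ every $p \in \mathrm{supp}(\mu)$ is f-generic,'' is where the real work lies, and I expect it to be the main obstacle. By the definition of f-generic type it suffices to show, for each $\phi(x) \in p$, that some left-translate of $\phi$ does not fork over a suitable small model; since $p$ is in the support, $\mu(\phi) > 0$, so the task reduces to the formula-level statement that a $G^{00}$-invariant measure giving $\phi$ positive mass forces $\phi$ to be f-generic. To see that this cannot be established by measure-theoretic input alone, suppose $p$ is not $G^{00}$-invariant: then some $g \in G^{00}$ has $g\cdot p \neq p$, so there is $\phi \in p$ with $\theta(x) := \phi(x) \wedge \neg\phi(gx) \in p$, whence $\mu(\phi \setminus g^{-1}\phi) = \mu(\theta) > 0$. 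Invariance of $\mu$ under $\langle g\rangle \subseteq G^{00}$ forces all $g^{-n}\phi$ to have equal measure and all consecutive differences to have the fixed positive mass $\mu(\theta)$. This situation is consistent in general (the irrational rotation of a circle shows an invariant probability measure need not fix the points of its support), so tools such as total boundedness of the VC family of translates of $\phi$ in the pseudometric $d_\mu(S,S') = \mu(S \triangle S')$ --- which follows from Fact \ref{fact_epsilonnet} --- are by themselves insufficient.

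The way I would actually close the gap is by invoking the NIP structure of f-generic types, where the ``circle'' phenomenon is ruled out. Concretely, I would combine translate-invariance of $\mu$ with the standard correspondence in NIP between non-forking of (translates of) $\phi$ over a small model $M$ and positivity of an $M$-invariant global measure, using the bounded alternation of the single NIP formula $\phi$ along the orbit $(g^{-n}\phi)_n$ to produce such a measure; averaging the translates $\bar g \cdot \mu$ over $K = G/G^{00}$ against its Haar measure $h$ yields a genuinely $G$-invariant global measure that can be used to certify f-genericity of the relevant formulas, which is then transferred back to the types of the support. The crux --- and the step I expect to be hardest --- is precisely this passage from a merely $G^{00}$-invariant measure to non-forking information about individual formulas, and it is here that NIP is essential and the naive dynamical argument breaks down.
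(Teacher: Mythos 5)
Your first direction (every type in the support is fixed by $G^{00}$, hence $\mu$ is $G^{00}$-invariant) is correct, and so is your reduction of the converse to a formula-level claim: it suffices to show that if $\mu$ is $G^{00}$-invariant and $\mu(\phi)>0$ then $\phi$ is f-generic. (One slip in that reduction: by the definition used in the paper you must show that \emph{no} left-translate of $\phi$ forks over a suitable small model, not that \emph{some} translate does not fork.) Note that the paper itself states this Fact without proof, importing it from \cite{tamedyn}, so the comparison below is with the argument given there.

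The genuine gap is in your mechanism for the hard direction; you correctly locate where the content lies, but neither of your proposed tools works. The Haar-averaging construction $\nu=\int_K \bar g\cdot\mu\, dh(\bar g)$ has two problems: the measurability of $\bar g\mapsto \mu(g^{-1}\phi)$ is not addressed (a general global measure is not Borel-definable), and, fatally, there is no reason to have $\nu(\phi)>0$: positivity of the integrand is known only at the single coset $\bar g=\bar 1$, which typically has Haar measure zero. Concretely, if $\mu=\delta_p$ for $p$ f-generic, then $\nu$ is exactly the measure $\mu_p$ of the paper, and having $\mu_p(\phi)>0$ for every $\phi\in p$ is precisely \emph{almost periodicity}, which the paper treats as a strictly stronger property than f-genericity; for an f-generic, non-almost-periodic $p$ (and $\delta_p$ does satisfy both sides of the Fact), your certificate is vacuous on exactly the formulas that matter. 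The ``bounded alternation along $(g^{-n}\phi)_n$'' idea also cannot be completed: NIP bounds alternation only along \emph{indiscernible} sequences, and your own circle-rotation remark shows that the single-orbit measure data is consistent with finite VC-dimension. The missing idea --- and the way the proof in \cite{tamedyn} actually goes --- is to bring forking witnesses into $G^{00}$: if some translate of $\phi$ forks over a small model $M\supseteq M_0$ containing the parameters of $\phi$, then since forking equals dividing over models in NIP theories, there is an $M$-indiscernible sequence $(g_i)_{i<\omega}$ such that $\{g_i\phi\}_{i<\omega}$ is $k$-inconsistent; the $g_i$, having the same Lascar strong type over $M$, all lie in one coset of $G^{000}_M$ (the smallest bounded-index subgroup invariant under automorphisms fixing $M$), and $G^{000}_M\subseteq G^{00}$; hence the sets $g_0^{-1}g_i\phi$ are $G^{00}$-translates of $\phi$, all of measure $\mu(\phi)>0$ by $G^{00}$-invariance alone, and still $k$-inconsistent --- contradicting finite additivity, since among $n$ sets of measure $\geq\epsilon$ some point must lie in at least $n\epsilon$ of them. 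It is exactly this coset observation (indiscernibility forces the dividing translates into a single $G^{00}$-coset, so mere $G^{00}$-invariance equalizes their measures) that replaces your appeal to a fully $G$-invariant measure, and it is absent from your proposal.
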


Next a model-theoretic adaptation of Fact \ref{fact_epsilonnet}; here $\Av(p_1,\ldots,p_n;\phi(x))$ means $\frac 1 n |\{i : p_i \vdash \phi(x)\} |$.

\begin{fait}[\cite{NIPBook}, Proposition 7.11]\label{fact_nipappr}
Let $\mu(x)$ be any Keisler measure over a model $M$. Let $\phi(x;y)\in L$ and fix $\epsilon >0$. Then there are types $p_1,\ldots,p_n$ in the support of $\mu$ such that for any $b\in M$,
$$| \Av(p_1,\ldots,p_n;\phi(x;b)) - \mu(\phi(x;b)) | <\epsilon.$$
\end{fait}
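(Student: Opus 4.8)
The plan is to deduce this directly from the measure-theoretic $\epsilon$-approximation theorem (Fact \ref{fact_epsilonnet}), applied to the compact type space $S_x(M)$ equipped with $\mu$. The only substantive inputs are that NIP forces the relevant clopen family to have finite VC-dimension, and that one can arrange the approximating points to lie in the support. So first I would view $\mu$ as the regular Borel probability measure it induces on $S_x(M)$, identifying each definable set $\psi(x)$ with the clopen set $[\psi]=\{q\in S_x(M):\psi\in q\}$, and set $\mathcal F=\{[\phi(x;b)]:b\in M\}$. The key step is that $\mathcal F$ has finite VC-dimension: if $\{q_1,\ldots,q_m\}\subseteq S_x(M)$ is shattered by $\mathcal F$, then for each $I\subseteq\{1,\ldots,m\}$ there is $b_I\in M$ with $\phi(x;b_I)\in q_i\iff i\in I$; choosing realizations $a_i\models q_i$ in a monster model gives $\models\phi(a_i;b_I)\iff i\in I$, so $\{a_1,\ldots,a_m\}$ is shattered by instances of $\phi(x;y)$. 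Since $T$ is NIP, $\phi(x;y)$ has finite VC-dimension $k$, whence $m\leq k$ and $\mathcal F$ has VC-dimension at most $k$.

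Next I would check the measurability conditions $1$--$3$ of Fact \ref{fact_epsilonnet}, which here are essentially free. Each $[\phi(x;b)]$ is clopen, so condition $1$ holds, and for fixed $b$ the map $(q_1,\ldots,q_n)\mapsto\Av(q_1,\ldots,q_n;[\phi(x;b)])=\frac1n|\{i:\phi(x;b)\in q_i\}|$ is continuous. Hence the functions $f_n$ and $g_n$, being suprema over $b\in M$ of continuous functions, are lower semicontinuous, and in particular Borel, giving conditions $2$ and $3$. This is where the present setting is cleaner than Proposition \ref{prop_measurability}: the defining sets are clopen rather than merely Borel, so no analytic-set argument is needed and lower semicontinuity suffices.

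To force the points into the support $S$, I would pass to the closed subspace $S$, using the standard fact that a Keisler measure is Radon and so concentrates on its support, $\mu(S)=1$: the complement $\bigcup\{[\psi]:\mu([\psi])=0\}$ is open, any compact subset of it is covered by finitely many null clopen sets, and inner regularity gives $\mu(S^c)=0$. Restricting to $(S,\mu|_S)$, the family $\{[\phi(x;b)]\cap S:b\in M\}$ still has VC-dimension $\leq k$, the functions remain lower semicontinuous, and $\mu|_S([\phi(x;b)]\cap S)=\mu([\phi(x;b)])$. Applying Fact \ref{fact_epsilonnet} with parameters $k$ and $\epsilon$ then yields $N$ and points $p_1,\ldots,p_N\in S$ with $|\Av(p_1,\ldots,p_N;[\phi(x;b)])-\mu([\phi(x;b)])|<\epsilon$ for all $b\in M$; since $p_i\vdash\phi(x;b)\iff p_i\in[\phi(x;b)]$ and $\mu([\phi(x;b)])=\mu(\phi(x;b))$, this is exactly the asserted conclusion (with $N$ playing the role of $n$).

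I expect the conceptual heart to be the VC-dimension translation of the first paragraph, converting shattering of types by $\mathcal F$ into shattering of realizations by $\phi$. The step requiring the most care is the passage to the support: justifying $\mu(S)=1$ and verifying that the hypotheses of Fact \ref{fact_epsilonnet} survive restriction to $(S,\mu|_S)$. The measurability, by contrast, reduces to the observation that a supremum of continuous functions is lower semicontinuous, and should cause no difficulty.
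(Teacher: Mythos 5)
The paper does not prove this statement at all: it is quoted as a Fact from \cite{NIPBook} (Proposition 7.11), so there is no internal proof to compare against. Your argument is correct and is essentially the standard proof of that cited proposition: NIP transfers (via realizations) to a finite bound on the VC-dimension of the clopen family $\{[\phi(x;b)] : b\in M\}$ in $S_x(M)$; the regular Borel extension of $\mu$ gives $\mu(S)=1$ for the support $S$ by inner regularity; and the VC theorem (Fact \ref{fact_epsilonnet}, with the measurability hypotheses discharged by lower semicontinuity of suprema of continuous functions) applied to $(S,\mu|_S)$ yields the required types. Restricting to the support before invoking Fact \ref{fact_epsilonnet} is exactly the right move given that the paper's formulation only asserts existence of some $\epsilon$-approximation (rather than that $\mu^{\otimes n}$-random tuples work with high probability, which would let one intersect with $S^n$ afterwards); either variant completes the proof.
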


\begin{prop}\label{prop_apprdense}
Let $p\in S_G(\monster)$ be a global f-generic type. Fix a formula $\phi(x)\in L(\monster)$ and $\epsilon>0$. Then there are $g_1,\ldots,g_n\in G(M_0)$ such that for any $g\in G(\monster)$, $$\left | \mu_p(\phi(x)) - \Av(g_1\cdot p,\ldots,g_n\cdot p ; \phi(g^{-1}x)) \right | < \epsilon.$$
\end{prop}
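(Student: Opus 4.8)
The statement asks me to approximate the invariant measure $\mu_p$ by an average of finitely many translates $g_i \cdot p$ with the $g_i$ drawn from the \emph{small model} $G(M_0)$, and crucially the approximation must hold \emph{uniformly} over all translates $\phi(g^{-1}x)$ with $g \in G(\monster)$. The natural tool is Fact \ref{fact_nipappr}, the NIP $\epsilon$-approximation for Keisler measures, applied to the measure $\mu_p$ and the formula $\psi(x;y) = \phi(y^{-1}x)$ (so that the instances $\psi(x;g) = \phi(g^{-1}x)$ range exactly over the translates we care about as $y$ ranges over $G$). This would immediately give types $q_1,\dots,q_n$ in the support of $\mu_p$ such that $|\mu_p(\phi(g^{-1}x)) - \Av(q_1,\dots,q_n;\phi(g^{-1}x))| < \epsilon$ for all $g$.

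The gap between that and what is claimed is twofold: the approximating objects in Fact \ref{fact_nipappr} are arbitrary types $q_i$ in the support of $\mu_p$, whereas I need them to be of the special form $g_i \cdot p$ with $g_i \in G(M_0)$. I plan to bridge this in two moves. First, since $p$ is f-generic and $\mu_p$ is built from $p$ by averaging over the compact group $K = G/G^{00}$, the measure $\mu_p$ is $G^{00}$-invariant; by Fact \ref{fact_g00} every type in its support is f-generic. I would then want to say that the support of $\mu_p$ lies in $\overline{G \cdot p}$, the orbit closure of $p$, so that each $q_i$ can be replaced---up to the error tolerance $\epsilon$, on the finitely many relevant formulas---by a translate $g \cdot p$ with $g \in G(\monster)$. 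Since $\phi$ is a single formula, $\Av(q_1,\dots,q_n;\phi(g^{-1}x))$ depends only on the truth values $q_i \vdash \phi(g^{-1}x)$, and the set where $g \cdot p$ agrees with $q_i$ on all the finitely many instances is (by continuity/clopenness in the type space) a condition that translates in $\overline{G\cdot p}$ can meet.

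Second, I must push the translating elements down into $G(M_0)$. Here I would use that $G(M_0)$ is dense in the relevant sense: the map $g \mapsto g\cdot p$ from $G(\monster)$ into $S_G(\monster)$, followed by evaluation on the finitely many formulas $\phi(g_\ast^{-1}x)$ appearing in the uniform estimate, factors through something that $M_0$-points already realize, because the truth value of $\phi(h^{-1} \cdot g \cdot a)$ for $a \models p$ is governed by a formula over $M_0$ in $h$ and $g$, and $G(M_0)$ is a model and hence meets every nonempty $M_0$-definable set. So I can select each translating element inside $G(M_0)$ while preserving the finitely many Boolean conditions that control the average.

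The step I expect to be the main obstacle is making the replacement of support-types by orbit-translates \emph{uniform in $g$}: a single type $q_i$ must be approximated by one fixed $g_i \cdot p$ that works simultaneously for every instance $\phi(g^{-1}x)$, $g \in G(\monster)$, not separately for each $g$. The cleanest route is probably to apply Fact \ref{fact_nipappr} not abstractly but to the measure $\mu_p$ restricted so that its support already consists of translates of $p$---i.e. to observe that because $p$ is almost periodic, $\overline{G\cdot p}$ is a minimal flow and $\mu_p$ is supported on it, so the $q_i$ are \emph{a priori} in the orbit closure---and then to use density of $G(M_0)$-translates in that closure together with the fact that only finitely many $\phi$-instances (really, the single formula $\phi$ under left-translation) constrain the average. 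I would verify that the finitely many clopen conditions cutting out ``$g_i\cdot p$ agrees with $q_i$ on the relevant $\phi$-type'' are simultaneously satisfiable by some $g_i \in G(M_0)$, which reduces to non-emptiness of an $M_0$-definable set and hence to $G(M_0)$ being a model.
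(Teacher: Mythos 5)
Your opening move---applying Fact \ref{fact_nipappr} to $\mu_p$ and $\psi(x;y)=\phi(y^{-1}x)$ to get types $q_1,\ldots,q_n$ in the support of $\mu_p$---is fine, but the step you yourself flag as the main obstacle is a genuine gap, and the fix you sketch does not work. Replacing each $q_i$ by a translate $g_i\cdot p$ has to preserve the estimate \emph{simultaneously for every} $g\in G(\monster)$; that is an unbounded family of clopen conditions, one for each instance $\phi(g^{-1}x)$, not ``finitely many'' as you assert. Density of the orbit $G\cdot p$ in its closure (or of $G(M_0)\cdot p$ in anything) only buys agreement on finitely many formulas at a time, so no single $g_i$ is produced this way, and a type in $\overline{G\cdot p}$ need not be a translate of $p$ at all. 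Three further ingredients you invoke are unavailable: $p$ is only assumed f-generic, not almost periodic, so $\overline{G\cdot p}$ need not be a minimal flow; the density of $G(M_0)\cdot p$ in $\overline{G(\monster)\cdot p}$ is precisely the corollary that the paper \emph{deduces from} this proposition, so appealing to it is circular; and the claim that the truth value of $\phi(h^{-1}ga)$ for $a\models p$ is ``governed by a formula over $M_0$'' fails, since f-generic types are only Borel-definable over some small model, not definable, and not over $M_0$ in general.

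The paper's proof is structured entirely differently, and the detour is forced. It transfers the problem to the compact group $K=G/G^{00}$ (after reducing to a countable language so that $K$ is second countable): writing $X=d_p(\phi)$, one has $\mu_p(\phi)=h(X)$, and the desired inequality becomes a uniform approximation of $h(\bar gX)$ over all translates $\bar g$ by averages of point masses at points of the dense subgroup $\pi(G(M_0))$. The VC-theorem (Fact \ref{fact_epsilonnet} with Proposition \ref{prop_measurability}) applied \emph{inside $K$} gives approximating points $\bar h_k\in K$ for translates of Borel VC-sets $X_0\subseteq V_{0,r}\subseteq\mathring X$ and $X_1\subseteq V_{1,r}\subseteq X^{ext}$ obtained by shrinking $\mathring X$ and $X^{ext}$ by a metric margin $r$; the $\bar h_k$ are then perturbed to points $\bar g_k\in\pi(G(M_0))$ at distance $<r$, the margin guaranteeing that the perturbation cannot cross $\partial X$ in any translate. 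Closing the resulting sandwich requires exactly that $h(\partial X)=0$, i.e.\ Theorem \ref{th_compactdom} (generic compact domination, itself resting on Theorem \ref{th_constr}), which your proposal never engages with: without knowing the boundary of $X$ is Haar-null, no argument of the kind you outline can control the error coming from points that approximate into $\partial X$.
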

\begin{proof}
First we show that we may assume that $L$ is countable. Let $L_0$ be a countable sublanguage sufficient to define $G$ and $\phi(x)$. Then the reduct of $p$ to $L_0$ is also f-generic. Letting $G^{00}_{L_0}$ be $G^{00}$ in the sense of $L_0$, we have a canonical map $\pi_0:G/G^{00} \to G/G^{00}_{L_0}$ and the Haar measure on $G/G^{00}_{L_0}$ is the pushforward of the Haar measure on $G/G^{00}$. One sees then that the measure $\mu_p$ in the sense of $L_0$ agrees with $\mu_p$ in the sense of $L$ for $L_0$-formulas. Hence we may take $L$ to be countable and then $G/G^{00}$ is a compact second countable group.

Define $X = d_p(\phi(x))$ as above, namely $X = \{\bar g\in G/G^{00} : \bar g\cdot p \vdash \phi(x)\}$. So we have $\mu_p(\phi(x)) = h(X)$. Fix a left-invariant metric $d$ on $G/G^{00}$. For $r>0$, let $V_{0,r}\subseteq \mathring X$ be the set of points at distance $>r$ from $(\mathring X)^c$ and $V_{1,r}\subseteq X^{ext}$ be the set of points at distance $>r$ from $\overline{\mathring X}$. Then $\mathring X = \bigcup_{r>0} V_{0,r}$ and hence there is $r>0$ such that $\mu(V_{0,r})>\mu(\mathring X)-\epsilon$. Similarly, we can take $r$ so that $\mu(V_{1,r})>\mu(X^{ext})-\epsilon$. Fix such an $r$ and for $i=0,1$, let $V_i = V_{i,r}$.

By regularity of the Haar measure, there are closed sets $C_0\subseteq V_0$ and $C_1\subseteq V_1$ such that $h(V_i\setminus C_i)<\epsilon$ for $i=0,1$. Let $\pi : G \to G/G^{00}$ be the canonical projection. One can find definable sets $\theta_0$ and $\theta_1$ such that $\pi^{-1}(C_i)\subseteq \theta_i \subseteq \pi^{-1}(V_i)$, $i=0,1$. Let $q$ be any f-generic type concentrating on $G^{00}$ and define $X_i = d_q(\theta_i)$. Then we have $C_i \subseteq X_i \subseteq V_i$ and $X_i$ is a Borel VC-set as in the proof of Theorem \ref{th_compactdom}. By the VC-theorem \ref{fact_epsilonnet} (and Proposition \ref{prop_measurability}), we can find points $\bar h_1,\ldots,\bar h_n \in G/G^{00}$ such that
$$\left | h(\bar g X_i) - \Av(\bar h_1,\ldots,\bar h_n;\bar gX_i) \right | \leq \epsilon,$$
for all $\bar g\in G/G^{00}$ and $i=0,1$.

Now $\pi(G(M_0))$ is dense in $G/G^{00}$, hence we can find points $g_k \in G(M_0)$, $k\leq n$, such that $d(\bar g_k,\bar h_k)<r$, where $\bar g_k = \pi(g_k)$. Fix any $\bar g\in G/G^{00}$. Then if $\bar h_k \in \bar g X_0$, we have $\bar g_k \in \bar g \mathring X$ since $\bar g X_0\subseteq \bar g V_{0,r}$. Similarly, if $\bar h_k \in \bar g X_1$, then $\bar g_k \in \bar gX^{ext}$.

We then have:
$$\Av(\bar h_1,\ldots,\bar h_n;\bar g X_0)\leq \Av(\bar g_1,\ldots,\bar g_n;\bar g X)\leq 1-\Av(\bar h_1,\ldots,\bar h_n;\bar g X_1).$$
By construction of the $\bar h_k$'s, $\Av(\bar h_1,\ldots,\bar h_n;\bar g X_i)\geq h(\bar g X_i) - \epsilon \geq h(\bar g V_i) - 2\epsilon$. By Theorem \ref{th_compactdom}, $h(\mathring X)=h(X)$, therefore $h(\bar g V_0)\geq h(\bar g X)- \epsilon$. Also $h(X^{ext})=1-h(X)$, hence $1-h(\bar gV_1)\leq h(\bar g X)+\epsilon$. Putting it all together, we see that $$h(\bar g X)-3\epsilon\leq \Av(\bar g_1,\ldots,\bar g_n;\bar g X) \leq h(\bar g X)+ 3\epsilon,$$
from which the result follows.
\end{proof}

\begin{cor}
Let $p\in S_G(\monster)$ be almost periodic (equivalently, $p$ is f-generic and weakly random for $\mu_p$), then $\overline{G(M_0)\cdot p} = \overline{G(\monster)\cdot p}$.
\end{cor}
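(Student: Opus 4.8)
The plan is to show that every type in $\overline{G(\monster)\cdot p}$ already lies in the closure of the smaller orbit $\overline{G(M_0)\cdot p}$. Since $\overline{G(M_0)\cdot p}\subseteq \overline{G(\monster)\cdot p}$ is immediate, and the right-hand side is the closure of an orbit under the full group, it suffices to prove that the $G(M_0)$-orbit is already dense in the $G(\monster)$-orbit closure. Concretely, I would fix an arbitrary $g\in G(\monster)$ and a basic clopen neighborhood $\phi(x)$ of $g\cdot p$ in $S_G(\monster)$ (so $g\cdot p\vdash \phi(x)$, equivalently $p\vdash \phi(gx)$), and aim to produce some $g_*\in G(M_0)$ with $g_*\cdot p\vdash \phi(x)$, i.e. such that $g_*\cdot p$ lands in the same clopen set. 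This would show the orbit under $G(M_0)$ meets every basic neighborhood of $g\cdot p$, giving density.

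The key tool is Proposition \ref{prop_apprdense}, which I would apply to the formula $\phi(x)$ and a small $\epsilon>0$ to obtain $g_1,\ldots,g_n\in G(M_0)$ with
$$\left| \mu_p(\phi(x)) - \Av(g_1\cdot p,\ldots,g_n\cdot p; \phi(g^{-1}x)) \right| < \epsilon$$
for \emph{every} $g\in G(\monster)$. The crucial point where almost periodicity enters is that $p$ is weakly random for $\mu_p$, so that membership $g\cdot p\vdash \phi(x)$ forces $\mu_p(\phi(gx))=\mu_p((g\cdot p)\text{-relevant set})>0$; more precisely, weak randomness means any formula in $g\cdot p$ has positive $\mu_{g\cdot p}$-measure. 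I would use this to argue that the average over the $g_k\cdot p$ cannot vanish: if $g\cdot p\vdash\phi(x)$ and $\mu_p(\phi)$ were controlled, the approximation forces at least one $g_k\cdot p$ to satisfy $\phi(g^{-1}x)$, hence $g g_k\cdot p$ lies in the clopen set. The subtlety is converting this into membership for a single translate by an element of $G(M_0)$; one typically leverages that the set of $g$ for which $g\cdot p\vdash\phi$ is exactly $d_p(\phi)$ pulled back, combined with the density of $\pi(G(M_0))$ in $G/G^{00}$ and weak randomness to guarantee the relevant coset has positive Haar measure and is therefore hit.

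The main obstacle, as I see it, is the passage from the \emph{measure-theoretic} approximation statement (average of indicator values close to $\mu_p$) to the \emph{combinatorial/topological} conclusion that a specific $G(M_0)$-translate lands in the prescribed clopen set. The approximation gives control only up to $\epsilon$ and over averages, whereas the desired conclusion is a clean membership $g_*\cdot p\vdash\phi$. I expect the resolution to hinge on almost periodicity in its orbit-closure formulation: $\overline{G\cdot p}$ is a \emph{minimal} flow, so every point of the orbit closure has a dense orbit, and one exploits that $G(M_0)$ already projects densely onto $G/G^{00}$ together with weak randomness to rule out the approximation being achieved ``vacuously'' by types outside the relevant neighborhood. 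Once one knows some $g_k\cdot p$ satisfies the translated formula with positive weight, minimality and the density of $\pi(G(M_0))$ close the gap, yielding $g_*\in G(M_0)$ with $g_*\cdot p\vdash \phi(x)$ and hence the claimed equality of orbit closures.
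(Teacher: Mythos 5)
Your overall strategy is the same as the paper's: the two key ingredients are exactly right, namely (i) weak randomness of $p$ for $\mu_p$ together with $G$-invariance of $\mu_p$, which upgrades $g\cdot p\vdash\phi(x)$ to $\mu_p(\phi(x))>0$, and (ii) Proposition \ref{prop_apprdense}, which transfers this positivity to $G(M_0)$-translates. (The paper runs the argument contrapositively, choosing $\phi$ disjoint from $\overline{G(M_0)\cdot p}$ but meeting $\overline{G(\monster)\cdot p}$, but that is cosmetic.) However, your write-up has a genuine gap at the decisive step, and the ``main obstacle'' you describe is one you created yourself. The approximation in Proposition \ref{prop_apprdense} holds for \emph{every} $g\in G(\monster)$, in particular for $g=1$, where it reads
$$\left|\mu_p(\phi(x)) - \Av(g_1\cdot p,\ldots,g_n\cdot p;\phi(x))\right| < \epsilon.$$
Choosing $\epsilon<\mu_p(\phi(x))$ at the outset, this forces $\Av(g_1\cdot p,\ldots,g_n\cdot p;\phi(x))>0$, i.e.\ some $g_k\in G(M_0)$ with $g_k\cdot p\vdash\phi(x)$, which is exactly the desired $g_*$; the proof ends there. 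Instead, you instantiate the proposition at the monster-model element $g$ witnessing $g\cdot p\vdash\phi(x)$; that only yields $g_k\cdot p\vdash\phi(g^{-1}x)$, i.e.\ membership of the mixed translate $(g^{-1}g_k)\cdot p$ in $\phi$, which is useless for your goal, and you are then left trying to strip off $g$.

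The patch you sketch for that self-inflicted problem does not work as stated. You propose to combine minimality of $\overline{G\cdot p}$ with density of $\pi(G(M_0))$ in $G/G^{00}$ and positivity of measure to ``close the gap,'' but density of a subset of $G/G^{00}$ together with $h(d_p(\phi))>0$ does not imply that the dense set meets $d_p(\phi)$: a dense set can avoid a set of positive Haar measure (compare the rationals and a fat Cantor set in $\mathbb R/\mathbb Z$). To make that route honest you would need $d_p(\phi)$ to have nonempty interior of full measure, which is again the compact-domination machinery of Theorem \ref{th_compactdom} and Theorem \ref{th_constr} --- precisely the content that Proposition \ref{prop_apprdense} has already packaged for you. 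So: keep your reduction to finding $g_*$, keep the weak-randomness step, and finish by evaluating the approximation at $g=1$; the final paragraph of your proposal should be deleted rather than repaired.
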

\begin{proof}
Assume that $\overline{G(M_0)\cdot p} \subsetneq \overline{G(\monster)\cdot p}$, then there is a formula $\phi(x)$ which is disjoint from the first set, but not from the second. But then $\mu_p(\phi(x))>0$ by almost periodicity which contradicts Proposition \ref{prop_apprdense}.
\end{proof}

\begin{cor}
The minimal flows in the dynamical system $(G(M_0), S_G(M_0))$ are precisely the projections of the minimal flows in $(G(\monster),S_G(\monster))$.
\end{cor}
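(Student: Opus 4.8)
The plan is to present the restriction map to $M_0$ as a factor map of flows for the common acting group $G(M_0)$, reduce the statement to a comparison of minimal subflows inside $S_G(\monster)$, and then feed in the previous corollary.

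First I would set up the map $\rho\colon S_G(\monster)\to S_G(M_0)$, restriction to $M_0$. It is continuous, surjective, and $G(M_0)$-equivariant (restriction commutes with left translation by elements of $G(M_0)$), hence a factor map of $G(M_0)$-flows. I would then recall the two standard facts about a factor map $\rho$ over a \emph{fixed} acting group: the $\rho$-image of a minimal subflow is a minimal subflow, and conversely every minimal subflow $N'$ of the target has the form $\rho(M')$ for a minimal subflow $M'$ of the source (take a minimal subflow inside the closed invariant set $\rho^{-1}(N')$ and use minimality of $N'$). Thus the minimal flows of $(G(M_0),S_G(M_0))$ are exactly the $\rho$-images of the minimal $G(M_0)$-subflows of $S_G(\monster)$, and the whole statement reduces to identifying the minimal $G(M_0)$-subflows of $S_G(\monster)$ with the minimal $G(\monster)$-subflows.

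The easy inclusion comes straight from the previous corollary. If $M\subseteq S_G(\monster)$ is a minimal $G(\monster)$-subflow, then each $p\in M$ has $\overline{G(\monster)\cdot p}=M$ minimal, so $p$ is almost periodic by the stated equivalence; the previous corollary then gives $\overline{G(M_0)\cdot p}=\overline{G(\monster)\cdot p}=M$ for every $p\in M$. Hence every $G(M_0)$-orbit is dense in $M$, and since $M$ is $G(M_0)$-invariant (being $G(\monster)$-invariant), $M$ is a minimal $G(M_0)$-subflow. Consequently $\rho(M)$ is a minimal flow of $(G(M_0),S_G(M_0))$, which yields the inclusion ``projections of minimal $G(\monster)$-subflows $\subseteq$ minimal flows of $S_G(M_0)$''.

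For the reverse inclusion it suffices, by the reduction above, to show that every minimal $G(M_0)$-subflow $N$ of $S_G(\monster)$ is in fact $G(\monster)$-invariant; it is then automatically $G(\monster)$-minimal, since any nonempty closed $G(\monster)$-invariant subset is $G(M_0)$-invariant and hence all of $N$. Fixing $q\in N$, so that $N=\overline{G(M_0)\cdot q}$, the $G(\monster)$-orbit closure $\overline{G(\monster)\cdot q}$ contains a minimal $G(\monster)$-subflow $M$, which by the previous paragraph is also a minimal $G(M_0)$-subflow; since distinct minimal $G(M_0)$-subflows are disjoint, it is enough to prove $N\cap M\neq\emptyset$, forcing $N=M$. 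This is the crux, and the step I expect to be the main obstacle: $M$ lives in the $G(\monster)$-orbit closure of $q$ but a priori need not meet the smaller $G(M_0)$-orbit closure $N$. I would try to force an almost periodic type into $N$ by re-running the approximation argument behind Proposition \ref{prop_apprdense}: using the density of $\pi(G(M_0))$ in $G/G^{00}$ together with the $\epsilon$-approximation theorem to control the $G(M_0)$-translates of $q$ tightly enough to exhibit in $N$ an f-generic type that is weakly random for its associated measure, i.e. almost periodic. Such a type would lie in both $N$ and a minimal $G(\monster)$-subflow, giving $N=M$. The genuine difficulty I anticipate is precisely showing that $G(M_0)$-minimality already entails the presence of a weakly random f-generic type, as the previous corollary is only stated for types already known to be almost periodic.
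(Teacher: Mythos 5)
Your factor-map reduction and the first inclusion are fine: the image under $\rho$ of a minimal $G(\monster)$-subflow is a minimal $G(M_0)$-flow, exactly as in the paper, via the previous corollary. The gap is in the reverse inclusion, and it is not merely a missing verification: the statement you are aiming for --- that every minimal $G(M_0)$-subflow $N$ of $S_G(\monster)$ is $G(\monster)$-invariant, or even just meets some minimal $G(\monster)$-subflow --- is \emph{false}. Take $T$ to be Presburger arithmetic, $G=(\mathbb{Z},+)$, $M_0=\mathbb{Z}$. Let $F\subseteq S_G(\monster)$ be the set of global types $p$ such that $p\vdash x>n$ for every $n\in\mathbb{Z}$ and $p\vdash x<a$ for every positive nonstandard $a\in G(\monster)$ (the ``gap'' just above $\mathbb{Z}$). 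By quantifier elimination such a type is determined by its congruence conditions, so $F$ is homeomorphic to $\hat{\mathbb{Z}}$ with $\mathbb{Z}$ acting by translation along a dense subgroup: $F$ is closed, $G(M_0)$-invariant, and every $G(M_0)$-orbit in it is dense, i.e.\ $F$ is a minimal $G(M_0)$-subflow of $S_G(\monster)$. But for $h$ positive nonstandard and $p\in F$ we get $h+p\vdash x>h$ while every type in $F$ implies $x<h$, so $h+F$ is disjoint from $F$. Worse for your intended repair: the same computation with $h\in G^{00}$ positive shows no type of $F$ is $G^{00}$-invariant, so $F$ contains no f-generic (hence no almost periodic) type and is \emph{disjoint} from every minimal $G(\monster)$-subflow. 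No approximation argument in the style of Proposition \ref{prop_apprdense} can force an almost periodic type into $N$, because there need not be one.

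The paper proves the reverse inclusion by a different, assumption-free device: \emph{heirs}. Given a minimal $G(M_0)$-flow $X\subseteq S_G(M_0)$, pick $p\in X$ and let $\tilde p\in S_G(\monster)$ be an heir of $p$. If $\phi(x)\in L(M_0)$ is disjoint from $X$, then $p\vdash\neg\phi(g\cdot x)$ for all $g\in G(M_0)$ (as $X$ is a flow), and the heir property upgrades this to $\tilde p\vdash\neg\phi(g\cdot x)$ for all $g\in G(\monster)$. Hence the whole orbit $G(\monster)\cdot\tilde p$, and therefore its closure, lies inside $\rho^{-1}(X)$; any minimal $G(\monster)$-subflow $Y$ of that closure has $\rho(Y)$ a nonempty closed $G(M_0)$-invariant subset of $X$, so $\rho(Y)=X$ by minimality of $X$. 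Note that this direction uses no NIP and no definable amenability; the model-theoretic content is entirely in the heir, and this is the idea missing from your proposal. (In the Presburger example this produces the minimal $G(\monster)$-flow of types lying beyond all of $G(\monster)$ with prescribed congruences, which projects onto $\rho(F)$ even though it is disjoint from $F$.)
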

\begin{proof}
Let $\pi:S_G(\monster) \to S_G(M_0)$ be the natural projection. Let first $Y\subseteq S_G(\monster)$ be a minimal flow. So $Y$ has the form $\overline{G\cdot p}$ for some almost periodic type $p$. By the previous corollary, $G(M_0)\cdot p$ is dense in $Y$, hence also $G(M_0)\cdot \pi(p)$ is dense in $\pi(Y)$. As this is true for all $p\in Y$, $\pi(Y)$ is a minimal flow.

The converse is true without any assumptions on $T$ or $G$: let $X\subset S_G(M_0)$ be a minimal flow under the action of $G(M_0)$. Let $p\in X$ be a type and let $\tilde p$ be an heir of $p$ over $\monster$. Given $\phi(x)\in L(M_0)$ disjoint from $X$, we have that $p\vdash \neg \phi(g\cdot x)$ for any $g\in G(M_0)$ as $X$ is a flow. By the heir property, also $\tilde p\vdash \neg \phi(g\cdot x)$ for any $g\in G(\monster)$. This means that the orbit $G(\monster)\cdot p$ lies entirely inside $\pi^{-1}(X)$, hence so does its closure $\overline{G(\monster)\cdot p}$, which is a subflow. Let $Y$ be a minimal $G(\monster)$-subflow inside that closure. Then $\pi(Y)\subseteq X$ is a $G(M_0)$-subflow, but then $\pi(Y)=X$ by minimality of $X$.
\end{proof}

The point of the following theorem is that---at least when $L$ is countable---the study of invariant measures on $G$, can be reduced to the study of invariant measures for the action of a countable group on a compact space (which is the situation for many theorems in topological dynamics). Here, the compact space would be the space of $G^{00}(\monster)$-invariant types and the countable group would be $G(M_0)$, for $M_0$ a countable model.

\begin{thm}
Let $G$ be a definably amenable NIP group and let $\mu$ be a global $G^{00}(\monster)$-invariant and $G(M_0)$-invariant measure. Then $\mu$ is $G(\monster)$-invariant.
\end{thm}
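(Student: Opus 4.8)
The plan is to decompose an arbitrary measure $\mu$ satisfying the hypotheses into pieces that are each controlled by a single f-generic type, and then apply Proposition \ref{prop_apprdense} to each piece. By Fact \ref{fact_g00}, the hypothesis that $\mu$ is $G^{00}(\monster)$-invariant tells us that every type in the support of $\mu$ is f-generic. I would first reduce to the case where $L$ is countable, exactly as in the proofs of Theorem \ref{th_compactdom} and Proposition \ref{prop_apprdense}: the invariance properties we must verify concern a single formula $\phi(x)$ at a time, so we may pass to a countable sublanguage $L_0$ containing $\phi$ and the data defining $G$, at which point $G/G^{00}$ is a second countable compact group.

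Fix a formula $\phi(x) \in L(\monster)$ and $g \in G(\monster)$; to prove $G(\monster)$-invariance it suffices to show $\mu(\phi(x)) = \mu(\phi(g^{-1}x))$, and since $\mu$ is already $G(M_0)$-invariant it is enough to approximate $\mu(\phi(g^{-1}x))$ by averages over $G(M_0)$-translates. The key idea is that, because $\mu$ is $G^{00}$-invariant, $\mu$ ``factors through'' the invariant-measure machinery: for each f-generic $p$ in the support we have the measure $\mu_p$, and Proposition \ref{prop_apprdense} gives, for any $\epsilon > 0$, points $g_1,\ldots,g_n \in G(M_0)$ with
$$\left| \mu_p(\phi(x)) - \Av(g_1\cdot p,\ldots,g_n\cdot p\,;\,\phi(g^{-1}x)) \right| < \epsilon$$
uniformly in $g \in G(\monster)$. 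The plan is to integrate such an estimate against $\mu$. Concretely, I would use Fact \ref{fact_nipappr} to approximate $\mu$ itself by an average of finitely many types $p_1,\ldots,p_m$ in its support, so that $\mu(\phi(x;b))$ is within $\epsilon$ of $\Av(p_1,\ldots,p_m;\phi(x;b))$ for every parameter $b$; crucially, applying this to the formula $\psi(x;y) = \phi(y^{-1}x)$ makes the approximation uniform over all translates simultaneously.

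The heart of the argument is then to combine the two approximations. For each $p_j$ (which is f-generic, hence $G^{00}$-invariant, so $\mu_{p_j}$ is defined) Proposition \ref{prop_apprdense} supplies $G(M_0)$-points whose $p_j$-averages approximate $\mu_{p_j}(\phi(g^{-1}x))$ uniformly in $g$; here one must check that $\mu_{p_j}(\phi(x))$ equals the contribution of $p_j$ to $\mu(\phi(x))$ after averaging, or more robustly, that the uniform-in-$g$ estimate survives the finite averaging over $j$. Pooling the resulting $G(M_0)$-points across all $j$ yields finitely many $g_1,\ldots,g_N \in G(M_0)$ such that $\mu(\phi(g^{-1}x))$ is within $O(\epsilon)$ of $\Av(g_1,\ldots,g_N;\phi(g^{-1}x))$, uniformly in $g$. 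Since each $g_k \in G(M_0)$ and $\mu$ is $G(M_0)$-invariant, each term $\mu(\phi(g_k^{-1}x))$ equals $\mu(\phi(x))$, so the average of $\mu$-values of the translated formulas is simply $\mu(\phi(x))$; evaluating the uniform estimate at the particular $g$ and at $g = 1$ and comparing forces $|\mu(\phi(g^{-1}x)) - \mu(\phi(x))| = O(\epsilon)$, and letting $\epsilon \to 0$ gives invariance.

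I expect the main obstacle to be bookkeeping the two layers of $\epsilon$-approximation so that they remain uniform in the arbitrary translation parameter $g$ at the same time. The subtlety is that Proposition \ref{prop_apprdense} is about a single f-generic type, whereas $\mu$ is a measure whose support may contain many such types; reconciling the averaging over support-types (Fact \ref{fact_nipappr}) with the per-type approximation requires care that the $G(M_0)$-points chosen work simultaneously. A cleaner route, which I would pursue if the direct integration becomes awkward, is to observe that $G^{00}$-invariance of $\mu$ means $\mu$ pushes forward to a measure on the compact group $G/G^{00}$ whose relevant defining sets are the VC-sets $d_q(\phi)$ studied above; then Theorem \ref{th_compactdom} (via $h(\partial X) = 0$) lets one replace the constructible target sets by their interiors up to measure zero, and the density of $\pi(G(M_0))$ in $G/G^{00}$ transfers the $\epsilon$-approximation points into $G(M_0)$ exactly as in Proposition \ref{prop_apprdense}. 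Either way, the essential content is that compact domination kills the boundary contributions, allowing finite $G(M_0)$-averages to see the full $G(\monster)$-translation behavior.
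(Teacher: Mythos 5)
Your overall strategy---approximating $\mu$ by finitely many f-generic types from its support via Fact \ref{fact_nipappr} (applied to $\phi(y^{-1}x)$ so the estimate is uniform over translates), controlling each $\mu_{p_i}$ by $G(M_0)$-points via Proposition \ref{prop_apprdense}, and closing the loop with $G(M_0)$-invariance---is the same as the paper's. But the step you yourself call ``the heart of the argument'' has two genuine gaps. First, \emph{pooling} the $G(M_0)$-point sets obtained from separate applications of Proposition \ref{prop_apprdense} to each $p_i$ does not do what you need: a set of points that $\epsilon$-approximates the family of translates of $d_{p_1}(\phi)$, unioned with one that works for $d_{p_2}(\phi)$, approximates neither family (on each family, the points coming from the other application are uncontrolled), so the ``diagonal'' average you form is not tied to $\mu$ by any invariance argument. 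What is needed is a \emph{single} point set valid for all the sets $X^i=d_{p_i}(\phi)$ \emph{simultaneously}; this is why the paper re-runs the proof of Proposition \ref{prop_apprdense} with all the $X^i$ at once and applies the VC-theorem to the whole finite collection in one go.

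Second, and more seriously, your concluding comparison cannot work as written. Evaluating your uniform estimate at $g$ and at $g=1$ only forces $|\mu(\phi(g^{-1}x))-\mu(\phi(x))|=O(\epsilon)$ if the approximating quantity is (approximately) the \emph{same} at $g$ and at $1$, i.e.\ is $G(\monster)$-invariant. Your approximant is an average of truth values of the form $[g_k\cdot p_j\vdash \phi(g^{-1}x)]$, which has no such invariance, and your appeal to $G(M_0)$-invariance (``each term $\mu(\phi(g_k^{-1}x))$ equals $\mu(\phi(x))$'') concerns $\mu$-values of $G(M_0)$-translates---that is just the hypothesis restated, and under no reading does it connect to those truth-value averages. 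The missing idea is to introduce the honest $G(\monster)$-invariant measure $\nu=\frac 1n\sum_{i\leq n}\mu_{p_i}$ and prove $|\mu(\phi(gx))-\nu(\phi(gx))|\leq 2\epsilon$ \emph{uniformly in} $g$, by computing the double average $\frac{1}{nm}\sum_{i,j}[g_j\cdot p_i\vdash\phi(gx)]$ in two ways: averaging the simultaneous version of Proposition \ref{prop_apprdense} over $i$ shows it is within $\epsilon$ of $\frac 1n\sum_i\mu_{p_i}(\phi(gx))$, while Fact \ref{fact_nipappr} applied at the translated parameters $gg_j$, combined with $\mu(\phi(gg_jx))=\mu(\phi(gx))$ from $G(M_0)$-invariance, shows it is within $\epsilon$ of $\mu(\phi(gx))$. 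Invariance of $\nu$ then finishes the proof in one line. Your alternative ``cleaner route'' does not repair this: a $G^{00}$-invariant measure on a general definably amenable group is \emph{not} determined by its pushforward to $G/G^{00}$ (two distinct f-generic types concentrating on the same coset of $G^{00}$ give distinct measures with equal pushforwards), so passing to $G/G^{00}$ discards exactly the information the theorem is about.
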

\begin{proof}
By Fact \ref{fact_g00}, the support of $\mu$ consists only of f-generic types. Fix a formula $\phi(x)\in L(\monster)$ and $\epsilon>0$. By Fact \ref{fact_nipappr}, there are f-generic types $p_1,\ldots,p_n$ such that for any $g\in G(\monster)$, we have $$|\mu(\phi(gx)) - \Av(p_1,\ldots,p_n;\phi(gx))| \leq \epsilon.$$
Proposition \ref{prop_apprdense} is stated for one f-generic type, but works just as well for finitely many types: instead of doing the proof for just one $X=d_p(\phi)$, we have finitely many $X^i = d_{p_i}(\phi)$; we do the same analysis for each of them separately and apply the VC-theorem for all of them at once. This gives us points $g_1,\ldots,g_m\in G(M_0)$ such that for each $i\leq n$ and for each $g\in G(\monster)$,
$$|\mu_{p_i}(\phi(gx)) - \Av_j(g_j\cdot p_i;\phi(gx))|\leq \epsilon.$$
By $G(M_0)$-invariance of $\mu$, we also have that for any $g$ and $j\leq m$,
$$|\mu(\phi(gx)) - \Av(g_j\cdot p_1,\ldots,g_j\cdot p_n;\phi(gx))| \leq \epsilon.$$
Hence averaging over all $p_i$'s and $g_j$'s we obtain
$$\left | \frac 1 n \sum_{i\leq n} \mu_{p_i}(\phi(gx)) - \mu(\phi(gx)) \right | \leq 2 \epsilon.$$
and in particular
$$\left | \frac 1 n \sum_{i\leq n} \mu_{p_i}(\phi(x)) - \mu(\phi(x)) \right | \leq 2 \epsilon.$$

As the measure on the left-hand side is $G$-invariant, we deduce $|\mu(\phi(gx))-\mu(\phi(x))|\leq 4 \epsilon$, hence $\mu$ is $G$-invariant.
\end{proof}

\subsection{fsg groups}

An NIP group $G$ is \emph{fsg} if there is some type $p\in S_G(\monster)$ and a small model $M_0$ such that all translates of $p$ are finitely satisfiable in $M_0$. It is shown in \cite{NIP2} that such a group admits a unique invariant measure $\mu$, in particular it is definably amenable. Also, any set of positive measure is generic (finitely many translates cover the group). Let $p$ be a generic type, equivalently a type weakly-random for $\mu$, then $\overline {G\cdot p}$ is exactly the set of all generic types. Also, f-generic formulas and generic formulas coincide (\cite{HPS_note} or \cite{tamedyn}). We let $\pi: \overline{G\cdot p} \to G/G^{00}$ be the canonical projection. Theorem \ref{th_compactdom} gives us the following.

\begin{cor}
Let $G$ be an fsg group and let $S\subset S_G(\monster)$ be the set of generic types of $G$. Let $\phi$ be a definable subset of $G$ and define
$$E_\phi=\{\bar g\in G/G^{00} : \pi^{-1}(\bar g)\cap \phi\neq \emptyset\text{ and }\pi^{-1}(\bar g)\cap \phi^c \neq \emptyset\}.$$

Then $E_\phi$ has Haar measure 0.
\end{cor}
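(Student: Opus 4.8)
The plan is to reduce the corollary to a direct application of Theorem \ref{th_compactdom}, by checking that the fsg hypotheses place us exactly in the situation treated there. First I would recall the special features of the fsg case recorded in the subsection introduction: generic and f-generic types coincide, every generic type is weakly random for the unique invariant measure $\mu$, and for any generic $p$ the orbit closure $\overline{G\cdot p}$ is precisely the set $S$ of all generic types. In particular $S$ is a minimal $G$-invariant closed set, so every generic type is almost periodic.

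The one extra hypothesis demanded by Theorem \ref{th_compactdom} is that the base type concentrate on $G^{00}$. I would observe that the canonical projection $\pi : S \to G/G^{00}$ is surjective and that both $S$ and $\pi$ are independent of the choice of generic base point used to present $S = \overline{G\cdot p}$. Hence I am free to take $p$ to be a generic type whose image $\pi(p)$ is the identity coset; equivalently, $p$ concentrates on $G^{00}$. With this choice $p$ is simultaneously almost periodic and concentrated on $G^{00}$, so it meets all the hypotheses of Theorem \ref{th_compactdom}.

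It then remains only to match the two notions of $E_\phi$. The set $E_\phi$ in the corollary is built from the same data---the flow $S = \overline{G\cdot p}$ and the projection $\pi$---and its defining condition, that $\pi^{-1}(\bar g)\cap\phi\neq\emptyset$ and $\pi^{-1}(\bar g)\cap\phi^c\neq\emptyset$, is verbatim the one appearing before Fact \ref{fact_bairedom}. Thus the corollary's $E_\phi$ is literally the set to which Theorem \ref{th_compactdom} applies, and that theorem yields $h(E_\phi)=0$.

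The main (and essentially only) point requiring care is the bookkeeping of the second paragraph: that changing the presenting base point of $S$ to one concentrating on $G^{00}$ leaves both $S$ and $\pi$---and hence $E_\phi$---unchanged. This is the sole place I expect any friction, as it relies on the canonicity of $\pi$ together with the uniqueness of the minimal flow in the fsg case; everything else is an immediate invocation of Theorem \ref{th_compactdom}.
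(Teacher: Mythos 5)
Your proposal is correct and matches the paper's (implicit) argument: the paper offers no separate proof, treating the corollary as an immediate instance of Theorem \ref{th_compactdom} once one recalls that in the fsg case generic types are exactly the almost periodic ones and $S=\overline{G\cdot p}$ for any generic $p$, so one may choose the base type to concentrate on $G^{00}$. Your extra bookkeeping (surjectivity of $\pi$ and independence of $S$ and $\pi$ from the chosen generic base point, via translation of a generic type into $G^{00}$) is exactly the verification the paper leaves to the reader.
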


\begin{cor}
Let $G$ be fsg, then there is a unique Keisler measure on $G$ which is $G^{00}$-invariant and lifts the Haar measure on $G/G^{00}$.
\end{cor}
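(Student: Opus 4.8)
The statement has two halves---existence and uniqueness---and essentially all the work is in uniqueness, where compact domination enters.

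For existence I would simply take $\mu$ to be the unique $G$-invariant measure guaranteed by the fsg hypothesis. Being $G$-invariant it is in particular $G^{00}$-invariant. To see that it lifts $h$, observe that $\pi\colon S_G(\monster)\to G/G^{00}$ is $G$-equivariant, so $\pi_*\mu$ is a Borel probability measure on the compact group $K=G/G^{00}$ invariant under the dense subgroup $\pi(G(\monster))$; by continuity it is translation-invariant, hence equal to the normalized Haar measure $h$. Thus $\mu$ lifts $h$.

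For uniqueness, let $\mu'$ be any $G^{00}$-invariant measure with $\pi_*\mu'=h$. By Fact \ref{fact_g00} its support consists of f-generic types, which in the fsg case are exactly the generic types, i.e. the elements of $S=\overline{G\cdot p}$; hence $\mu'$ is concentrated on $S$. Fix a definable $\phi$ and set $A_\phi=\{\bar g: \pi^{-1}(\bar g)\subseteq\phi\}$, an open subset of $K$, and recall $E_\phi$ from the preceding corollary. Since the fibre over a point of $A_\phi$ lies entirely in $\phi$ while the fibre over a point outside $A_\phi\cup E_\phi$ lies entirely in $\phi^c$, we obtain on $S$ the inclusions
\[ \pi^{-1}(A_\phi)\cap S\ \subseteq\ \phi\cap S\ \subseteq\ \pi^{-1}(A_\phi\cup E_\phi)\cap S. \]
Applying $\mu'$ and using $\pi_*\mu'=h$ together with $h(E_\phi)=0$ (compact domination, Theorem \ref{th_compactdom}), both outer terms equal $h(A_\phi)$, whence $\mu'(\phi)=h(A_\phi)$. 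The same identity holds for the invariant measure $\mu$ of the existence part---directly, or via $h(A_\phi)=h(d_p(\phi))=\mu_p(\phi)=\mu(\phi)$ using $A_\phi\subseteq d_p(\phi)\subseteq A_\phi\cup E_\phi$---so $\mu'(\phi)=\mu(\phi)$ for every $\phi$, and therefore $\mu'=\mu$.

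The crux, and the only place the machinery of the paper is needed, is the identity $\mu'(\phi)=h(A_\phi)$: it says a $G^{00}$-invariant lift of $h$ must assign to $\phi$ the measure of the set of cosets whose entire fibre sits inside $\phi$, with nothing left over on the split fibres. This works precisely because $h(E_\phi)=0$; absent compact domination a split fibre could carry positive mass divided arbitrarily between $\phi$ and $\phi^c$, and uniqueness would fail. The only bookkeeping to watch is reading ``lifts $h$'' as $\pi_*\mu'=h$ on Borel sets and checking that $\pi^{-1}(A_\phi)$ and $\pi^{-1}(E_\phi)$ are $\mu'$-measurable, which holds since a Keisler measure extends to a regular Borel measure on the type space.
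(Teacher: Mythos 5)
Your proof is correct and follows essentially the same route as the paper: Fact \ref{fact_g00} places the support of any such measure inside the generic types $S$, and then compact domination ($h(E_\phi)=0$, from the preceding corollary) sandwiches $\mu'(\phi)$ between $h(A_\phi)$ and $h(A_\phi\cup E_\phi)$, forcing $\mu'(\phi)=h(A_\phi)$ and hence uniqueness---this is exactly the paper's computation $\lambda(\phi)=\pi_*\lambda(\pi(\phi))=h(\pi(\phi))$, since $\pi(\phi\cap S)$ and your $A_\phi$ differ only by a subset of the null set $E_\phi$. The one difference is that you also spell out the existence half (the paper's proof addresses only uniqueness, existence being implicit in the standard fsg facts), which is a correct and harmless addition.
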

\begin{proof}
The fact that the previous corollary implies this one is already proved in \cite[Proposition 5.7]{NIP3}, but we repeat the argument.

Let $\lambda$ be a global Keisler measure on $G$ which is $G^{00}(\monster)$-invariant and lifts the Haar measure on $G/G^{00}$. By Fact \ref{fact_g00}, the support of $\lambda$ is composed of f-generic---and thus generic---types, so $\lambda$ can be seen as a Borel measure on the set $S$ of global generic types. Compact domination then implies that it is entirely determined by its image $\pi_*(\lambda)$ on $G/G^{00}$: Let $\phi(x)$ be a definable set. Then as $E_\phi$ has measure 0, we must have $\lambda(\phi)= \pi_*\lambda(\pi(\phi)) = h(\pi(\phi))$. This proves uniqueness of $\lambda$.
\end{proof}

\bibliography{tout}
\bibliographystyle{alpha}

\end{document}